\documentclass[11pt,leqno,amscd,amssymb,verbatim, url]{amsart}
\usepackage{amsfonts,amssymb}
\usepackage{amsmath,amscd}
\oddsidemargin .2in \evensidemargin .2in \textwidth 6.1in
\newtheorem{thm}{Theorem}[section]%[chapter] theorem number will %continue
\usepackage{amsfonts,latexsym}
\newtheorem{lem}[thm]{Lemma}
\newtheorem{cor}[thm]{Corollary}

\newtheorem{prop}[thm]{Proposition}
\theoremstyle{definition}

\newtheorem{rem}[thm]{Remark}

\numberwithin{equation}{thm}
\usepackage{wrapfig}

%\include{sdiag}
% Math definitions

\newcommand{\head}{{\text{\rm hd}}\,}

\newcommand{\fa}{{\mathfrak a}}

\newcommand{\Ext}{{\text{\rm Ext}}}

\newcommand{\St}{\text{\rm St}}

\newcommand{\gr}{\operatorname{ {gr}}}

\newcommand{\id}{\operatorname{Id}}

\newcommand{\End}{\operatorname{End}}

\newcommand{\ind}{\operatorname{ind}}

\newcommand{\res}{\operatorname{res}}

\newcommand{\soc}{\operatorname{soc}}

\newcommand{\rad}{\operatorname{rad}}
\newcommand{\Aut}{\operatorname{Aut}}

%\newcommand{\im}{\operatorname{Im}}

%\newcommand{\St}{{\text{\rm St}}}

% \interval is used to provide better spacing after a[that
% is used as a closing delimiter.

\newcommand{\blist}{\begin{list}{\rom{(\roman{enumi})}}{\setlength
{\leftmarg in}{0em} \setlength{\itemindent}{7ex}
\setlength{\labelsep}{2ex}\setlength{\listparindent}{\parindent}
\usecounter{enumi}}}
\newcommand{\elist}{\end{list}}
%theorem labels

\begin{document}
\title[Variations on a theme of Cline and Donkin ]{\large {\bf Variations on a theme of Cline and Donkin}}

\author{Brian J. Parshall}
\address{Department of Mathematics \\
University of Virginia\\
Charlottesville, VA 22903} \email{bjp8w@virginia.edu {\text{\rm
(Parshall)}}}
\author{Leonard L. Scott}
\address{Department of Mathematics \\
University of Virginia\\
Charlottesville, VA 22903} \email{lls2l@virginia.edu {\text{\rm
(Scott)}}}
\thanks{Research supported in part by the National Science
Foundation} \subjclass{Primary 20G05}
\begin{abstract} Let $N$ be a normal subgroup of a group $G$. An $N$-module $Q$ is called $G$-stable provided that $Q$ is equivalent to the twist $Q^g$ of $Q$ by $g$, for every $g\in G$. If the action of $N$ on $Q$ extends to an action of $G$ on $Q$, then $Q$ is obviously $G$-stable, but the converse need not hold. A famous conjecture in the modular representation theory of reductive algebraic groups $G$ asserts that the (obviously $G$-stable) projective indecomposable modules (PIMs) $Q$ for the Frobenius kernels $G_r$ ($r\geq 1$) of $G$ have a $G$-module structure. It is sometimes just as useful (for a general module $Q$) to know that a finite direct sum $Q^{\oplus n}$ of $Q$ has a compatible $G$-module structure. In this paper, this property is called numerical stability. In recent work \cite{PS}, the authors established numerical stability in the special case of PIMs. We provide in this paper a more general context for that result, working in the context of $k$-group schemes and a suitable version of $G$-stability, called strong $G$-stability.
 Among our results here is the determination of necessary and sufficient conditions for the existence of a compatible $G$-module structure on a strongly $G$-stable $N$-module, in the form of a cohomological obstruction which must be trivial precisely when the $G$-module structure exists. Our main result is achieved by giving an approach to killing the obstruction by tensoring with certain
 finite dimensional $G/N$-modules.

   \end{abstract}
\maketitle
\section{Introduction}
Let $G$ be a reductive algebraic group over an algebraically closed field $k$ of positive
characteristic $p>0$. Assume $G$ is split over the prime field ${\mathbb F}_p$. For $r\geq1$, let $G_r$ be the  $r$th Frobenius kernel of $G$.
A still open conjecture, published in 1973 and due to Humphreys and Verma, asserts that PIMs  $Q$ for $G_r$ have compatible $G$-module structures; see \cite[p. 325]{Jan}. There is  a sharper conjecture, due to Donkin \cite{Donkin2},
that posits that these PIMs all arise as the restrictions to $G_r$ of specific tilting modules for $G$ (and hence have
a rational $G$-module structure). For $p\geq 2h-2$ ($h$ the Coxeter number of $G$) this
conjecture is valid. In a recent paper \cite{PS}, the authors proved a ``stable" version of Donkin's conjecture
valid in all characteristics.  More precisely, we proved that there is a positive integer $n$ such that the direct
sum $Q^{\oplus n}$ of $n$ copies of $Q$ is a $G$-module, and can be taken to be a tilting module for $G$.\footnote{In fact,
 \cite{PS} shows that if $\nu$ is an $r$-restricted dominant weight, then Donkin's conjecture holds for the $G_{n+r}$-PIM $Q(\nu+p^r(p^n-1)\rho)$, for all $n\gg 0$.
 } This result played an important role in our work there in finding bounds
for $\Ext$-groups.  The question now arises, ignoring issues of tilting modules,
whether some general version of our stability theorem just mentioned for PIMs might be set in a broader theoretical context, and
might be valid for a wider class of $G_r$-modules. In this paper, we prove the following result.

\begin{thm}\label{MainTheorem}  Let $Q$ be a finite dimensional rational $G_r$-module such that
 $Q$ is a $G_r$-direct summand
of a rational $G$-module $M$ in such a way that the $G_r$-socle $\soc^{G_r}Q$ of $Q$ is a $G$-submodule of $M$.
Then, for some positive integer $n$, the $G_r$-module structure on
$Q^{\oplus n}$ extends to a rational $G$-module structure.  In addition, it can be assumed that $\soc^{G_r}Q$
is a $G$-submodule of one of the summands $Q$. \end{thm}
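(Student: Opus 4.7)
The plan is to cast the theorem into the paper's framework, where strong $G$-stability of a $G_r$-module produces a cohomological obstruction to the existence of a compatible rational $G$-structure, and then to kill that obstruction by tensoring with an appropriate finite-dimensional $G/G_r$-module. First I would reduce to the case of finite-dimensional $M$: since rational $G$-modules are locally finite and $Q$ is finite-dimensional, I can replace $M$ by the rational $G$-submodule $M' = \langle Q\rangle_G$ generated by $Q$. A short check using the $G_r$-decomposition $M = Q \oplus C$ shows that $M' = Q \oplus (M' \cap C)$, so $Q$ remains a $G_r$-direct summand of $M'$ and $L := \soc^{G_r}Q$ remains a $G$-submodule of $M'$.

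Next I would verify that the hypotheses force $Q$ to be strongly $G$-stable in the sense of the paper. For each $g \in G$, the translate $gQ \subseteq M'$ is again a $G_r$-direct summand of $M'$ (since $G_r$ is normal in $G$), with $\soc^{G_r}(gQ) = gL = L$; thus all $G$-translates of $Q$ sit inside $M'$ as $G_r$-summands sharing the common, $G$-stable socle $L$. This rigid common socle provides the canonical compatibility among the various twists $Q^g$ needed to upgrade plain $G$-stability to the stronger form underlying the obstruction machinery.

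The main step is to invoke the paper's obstruction theorem: strong $G$-stability of $Q$ yields a class $\alpha$ in an $H^2$ attached to $G/G_r$ and the units of $\End_{G_r}Q$, whose vanishing is exactly the extensibility of the $G_r$-structure on $Q$ to a rational $G$-structure. To kill $\alpha$ I would apply the construction of the paper to produce a finite-dimensional ``projective'' rational representation $V$ of $G/G_r$ whose class inverts $\alpha$; tensoring then puts a genuine rational $G$-module structure on $Q \otimes V$, and since $V$ is trivial on $G_r$ one has $Q \otimes V \cong Q^{\oplus n}$ as a $G_r$-module, with $n = \dim V$. Producing such a $V$ — and in particular showing the obstruction is realized by a finite-dimensional projective representation of $G/G_r$, drawing on the abundance of rational representations available via Frobenius — is the principal obstacle and the central new content of the paper's method.

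For the final assertion, the $G$-stable inclusion $L \hookrightarrow Q \subseteq M'$ must be tracked through the tensor construction. A careful choice within $V$ — or, if necessary, the harmless device of replacing $V$ by $V$ together with a trivial summand, which does not disturb the vanishing of $\alpha$ — should ensure that $L$ embeds as a $G$-submodule of a single $Q$-summand inside $Q^{\oplus n}$, completing the proof. This last step should be a matter of careful bookkeeping once the construction of $V$ is established.
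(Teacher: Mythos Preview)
Your outline captures the broad strategy—pass from the hypothesis to strong $G$-stability, extract an obstruction, and kill it by tensoring with a $G/G_r$-module—but there is a substantive gap in the middle step. The obstruction you describe as ``a class $\alpha$ in an $H^2$ attached to $G/G_r$ and the units of $\End_{G_r}Q$'' is in general \emph{non-abelian}: the relevant kernel is the unipotent group $U=1_Q+J_V$ with $J_V$ the annihilator of $V=\soc^{G_r}Q$ in $\End_{G_r}(Q)$, and $J_V$ is nilpotent but typically not of square zero. So there is no single abelian $H^2$-class to ``invert'' by a projective $G/G_r$-representation, and the one-shot Clifford-theoretic picture you sketch does not apply. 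The paper instead runs an induction up the $G_r$-socle series: at each stage one passes from $V$ to $Q'=V+\soc^{G_r}_{-i-1}Q$, where now $\rad_{G_r}Q'\subseteq V$ forces $J_V^2=0$, so the obstruction \emph{is} an honest class in $H^2(G/G_r,J_V)$. That abelian class is then killed not by finding an inverse projective representation but by the Steinberg-module trick: $\ind_T^G k=\varinjlim \St_n\otimes\St_n^*$ is $G$-injective, so tensoring with $\End_k(Y)$ for $Y=\St_n$ (or $\St_n\otimes\St_n^*$) large enough annihilates any given finite-dimensional rational $H^2$. One must also check that the strongly $G$-stable structure on the larger pair $(Q,V)$ survives the tensoring compatibly with the new $G$-structure on $Q'\otimes Y$, so that the induction can continue; this compatibility (Lemma~\ref{specialcase}(b) in the paper) is a genuine technical point your sketch omits.

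Two smaller remarks. Your argument for strong $G$-stability via ``rigid common socle'' is gestural; the paper's construction is concrete: set $\alpha(g)=\sigma\circ g|_Q$ where $\sigma:M\to Q$ is the $G_r$-projection, and use the $G$-stability of $\soc^{G_r}Q$ to show $\alpha(g)$ is injective (hence invertible). For the last assertion, the paper does not add a trivial summand to $Y$; rather, tensor $G$-stability of the \emph{pair} $(Q,V)$ gives a $G$-structure on $Q\otimes Y$ with $V\otimes Y$ a $G$-submodule, and then replacing $Y$ by $Y\otimes Y^*$ exhibits $V\cong V\otimes 1_Y$ inside a single $Q$-summand (Lemma~\ref{help}).
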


When $Q$ is the injective hull\footnote{All projective $G_r$-modules are injective and vice versa.} of an irreducible $G_r$-module $L$, the hypothesis is easily verified, simply
by embedding $Q$ in the (infinite dimensional) $G$-injective hull of its socle, and taking
$M$ to be the finite dimensional $G$-module generated by the image of $Q$. (As is well-known, this can also be done more concretely
by embedding $Q$ into a rational $G$-module of the form $L'\otimes \St_r$, with $\St_r$  the $r$th Steinberg module and $L'$ a suitably chosen rational $G$-module,  but the present argument seems more theoretically satisfactory.) Hence, as a conclusion of the theorem,
we obtain a second proof to a key result in  \cite{PS}, guaranteeing the existence of certain rational $G$-modules in arbitrary
characteristic which behave much like PIMs for $G_r$.\footnote{See \cite[Cor. A.5]{PS}. The $G$-module constructed there has similar properties
                to the module $M$ in Theorem \ref{MainTheorem}. It is not the tilting module mentioned above, but is
                constructed from it, by tensoring with a twisted Steinberg module.  See the proof of Lemma 
                \ref{help} below for a similar strategy.}  We hope the broader context provided by
                the proof in this paper may have
further use. One consequence already is a cohomological obstruction theory for the original problem of extending $G_r$-modules to $G$-modules, applicable
not only to PIMs but to many of their natural submodules (e.~g., those in the socle series); see Corollary \ref{cortoendofsec3}.

Now let $Q$ be any finite dimensional rational $G_r$-module. For $g\in G$, let $Q^g$ be the rational $G_r$-module obtained by twisting the action of $G_r$ on $Q$ through
conjugation by $g$ (the left adjoint action of $g$ on $G_r$). The module $Q$ is called $G$-stable if $Q\cong Q^g$, for all $g\in G$. If
$Q$ satisfies the conclusion of the theorem, then a Krull-Schmidt argument shows that $Q$ is $G$-stable. As we will see
in Lemma \ref{biglemma}, which is inspired by work of Donkin \cite{Donkin1}, the hypothesis of the theorem actually
implies a strong version of $G$-stability, and is even equivalent to it; see \S4.3. It is this stronger notion that we consider further, obtaining in Theorem \ref{realmaintheorem}
a kind of necessary and sufficient condition for the conclusion of Theorem \ref{MainTheorem}.

The paper is organized as follows. The preliminary Section 2 begins with some (surprisingly
relevant) generalities on finite dimensional algebras. Then
it  introduces important notions concerning stability for modules attached
to a normal $k$-subgroup scheme of an algebraic group. Most of these concepts can be (or already have been) formulated for abstract groups.
 However, we present them in such a way that they easily extend to $k$-group schemes (and algebraic groups), guided in part by the
functorial viewpoint of \cite{DG}. (This paper accordingly works in an (almost)  ``distribution algebra free" environment.)  The main results are proved in Section 3. Theorem \ref{MainTheorem} itself is a
consequence of Theorem \ref{realmaintheorem}. The latter result, which is established for an affine algebraic group $G$ and a normal $k$-subgroup scheme $N$ such that quotient $G/N$ is reductive, provides an equivalence of three different concepts of stability, one of which
is $G$-stability in a strong form. The
conclusion of Theorem 1.1 appears in the form of ``numerical stability." A third notion, which we call ``tensor stability,"
plays a key role in the proof of Theorem \ref{realmaintheorem}; essentially, we remove cohomological obstructions using
tensor products. These arguments are new. Theorem \ref{finalthm} and Remark \ref{finalthmremark} revisit
the main result of \cite{Donkin1} on the characters of $G_r$-projective covers $Q$ of the irreducible modules.
By using Lemma \ref{firstlemma}, a filtration of $Q$ appearing implicitly in \cite{Donkin1} can be identified
 as the $G_r$-socle filtration. As a result of this identification, it follows that the associated graded module $\gr^{G_r} Q$ has
 a compatible rational $G$-module structure---a fact that seems to have gone unnoticed
 \footnote{Moreover, $\gr^{G_r}Q$ is a (non-trivial) graded module for a positively graded version $\gr_{G_r}{\text{\rm Dist}}(G)$ of the
 distribution algebra of $G$; see footnote 11.  Similar remarks apply to the the space $\gr_{G_r}Q$ obtained from the $G_r$-radical
 filtration of $Q$.   The authors
   do not know if the $G_r$-radical and socle series of $Q$ are the same. See \cite[D14]{Jan} for some positive results for $r=1$ and $p>h$, in the presence
   of a version of the Lusztig conjecture.}  Also, we recast the argument for more general normal $k$-subgroup schemes $N$ and rational $N$-modules $Q$ not necessarily PIMs. (There is no requirement here that $G$ or $G/N$ be reductive.)

    In Theorem \ref{endofsec3} and Corollary \ref{cortoendofsec3}, we take up the question of just how far our strong stability
   notion is from an actual $G$-module structure. In fact, we present an explicit non-abelian cohomological obstruction to the
   existence of a compatible  $G$-module structure. That is, the obstruction is trivial precisely when the $G$-module structure exists. This theory provides a direction for a deeper investigation, potentially giving insights to a proof
   or a counterexample to the conjecture that $G_r$-projective
   covers of irreducible modules are $G$-modules. As the proof of Theorem \ref{realmaintheorem} shows, the
         non-abelian obstruction becomes abelian
         in natural inductive settings.

   Finally, Section 4 extends some of the results of this paper to non-connected groups, and collects together some examples and further remarks.

This paper is heavily influenced by Donkin's 1982 paper \cite{Donkin1}. In particular, Donkin originated  what is now the
main argument in our Lemma \ref{biglemma} in his construction of a group he called $G^*$. This group, in our context, is a homomorphic
image of our group $G^\diamond$ which is introduced in \S2. See Theorem \ref{endofsec3} for
a more explicit description. In reference to $G^*$, Donkin stated in his introduction,
``This group is very similar to that considered by Cline in \S1 of [the 1972 paper \cite{Cline}]. Indeed, it is clear
such a construction is possible for a much wider class of algebraic groups and representations of (not necessarily
reduced) normal subgroups than those considered here." As in \cite{Donkin1}, the main import of this paper remains in
the representation theory of reductive groups $G$. However, the principal results
require at most that $G/N$ (and not $G$) be reductive, and all definitions
and supplementary results have, indeed, been placed in the broader context suggested by Donkin's remarks.
While Ed Cline's 1972 paper was written before the start of
our long collaboration with him, we have welcomed this opportunity to pick up a thread of his earlier research.

Finally, the authors heartily thank the referee for many relevant comments which significantly improved
the final version of this paper. In particular, he/she found a substantive gap in the original proof of the main result,
Theorem 3.3, which the authors have now repaired.

\medskip
\begin{center} {\bf{Notation}}\end{center}
\medskip

 Throughout this paper, $k$ is an algebraically closed field of characteristic
$p$. We allow $p=0$.  By an algebraic group, we mean a reduced, algebraic $k$-group scheme. Reductive algebraic groups will always mean connected and reductive (i.~e., a connected algebraic group over $k$ with trivial unipotent radical $R_u(G)$). We say that a reductive algebraic
group $G$ is simply connected if its derived subgroup $G'$ is either trivial (i.~e., $G$ is a torus) or is
semisimple and simply connected. The category of algebraic varieties (reduced algebraic $k$-schemes) is, of course,
fully embedded into the category of $k$-schemes. We prefer to use the terminology ``morphism of $k$-schemes" even when it is evident
that the objects involved are algebraic varieties.  In turn, the category of $k$-schemes is fully embedded into the category of
$k$-functors \cite{DG}. In particular, it is often possible to check properties of $k$-scheme morphisms by examining the associated
$k$-functor maps. This sophisticated point of view actually simplifies many considerations involving algebraic groups and
 their closed $k$-subgroup schemes.

 Given a module $Q$ for an algebra $A$ (or group), we can form the socle series $0=\soc^A_0Q\subseteq\soc^A_{-1}Q
\subseteq\soc_{-2}^AQ\subseteq\cdots$ and the radical series $A=\rad_A^0Q\supseteq\rad^1_AQ\supseteq\rad_A^2Q\supseteq\cdots$. For convenience,
$\soc_{-1}^AQ$ and $\rad^1_AQ$ can be simply denoted $\soc^AQ$ and $\rad_AQ$, respectively.

\section{Preliminaries} This section collects together some preliminary material which will be needed.

 \subsection{Endomorphism algebras}
Let $Q$ be a finite dimensional (left) module for a finite dimensional algebra $A$ over $k$, and form the endomorphism algebra
 $B=\End_A(Q)$. Let $J$ (resp., $J'$) be the annihilator ideal in $B$ of $\soc^AQ$ (resp., $Q/\rad_A Q$).

\begin{lem}\label{firstlemma} With the above notation,
 $J\soc^A_{-n}Q\subseteq \soc^A_{-n+1}Q$ and $J'\rad_A^nQ\subseteq\rad_A^{n+1}Q,$ for all $n\geq 0$.
In particular, both $J$ and $J'$ are   nilpotent. \end{lem}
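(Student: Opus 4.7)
The plan is to prove both inclusions by short inductions on $n$, with the driving observation that any $\varphi\in B=\End_A(Q)$ is $A$-linear, and that each of the two filtrations is built recursively out of the action (respectively annihilation) of the Jacobson radical $\rad(A)$ of $A$.

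For the socle inclusion, I would induct on $n$. The base case $n=1$ is exactly the definition of $J$, since by hypothesis $J\cdot\soc^AQ=0$. For the inductive step, take $\varphi\in J$ and $q\in\soc^A_{-n}Q$. The defining property of the socle series gives $\rad(A)\cdot q\subseteq\soc^A_{-n+1}Q$, and then applying the $A$-linear map $\varphi$ and invoking the induction hypothesis yields
\[
\rad(A)\cdot\varphi(q)\;=\;\varphi\bigl(\rad(A)\cdot q\bigr)\;\subseteq\;\varphi\bigl(\soc^A_{-n+1}Q\bigr)\;\subseteq\;\soc^A_{-n+2}Q.
\]
But by the defining property of the socle series this is exactly the condition $\varphi(q)\in\soc^A_{-n+1}Q$, completing the inductive step.

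For the radical inclusion, I would first record the closed form $\rad^n_AQ=\rad(A)^n\cdot Q$, immediate by induction from $\rad_AM=\rad(A)\cdot M$. Then for $\varphi\in J'$ the hypothesis $\varphi(Q)\subseteq\rad_AQ$ combined with $A$-linearity gives
\[
\varphi(\rad^n_AQ)\;=\;\varphi(\rad(A)^n\cdot Q)\;=\;\rad(A)^n\cdot\varphi(Q)\;\subseteq\;\rad(A)^n\cdot\rad_AQ\;=\;\rad^{n+1}_AQ,
\]
as claimed.

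Nilpotency follows at once from finite-dimensionality of $Q$: pick $N$ large enough that $\soc^A_{-N}Q=Q$ and $\rad^N_AQ=0$, and iterate the two inclusions $N$ times to obtain $J^N\cdot Q\subseteq\soc^A_0Q=0$ and $(J')^N\cdot Q\subseteq\rad^N_AQ=0$. Since $B$ acts faithfully on $Q$ (by the very definition $B=\End_A(Q)$), both $J^N$ and $(J')^N$ vanish in $B$. I do not foresee any real obstacle here; the whole argument is driven by $A$-linearity of endomorphisms and the recursive definitions of the two filtrations, with the only bit of bookkeeping being the negative indexing on the socle side and remembering that $\rad(A)^0=A$ so that the $n=0$ case of the radical claim reduces to the definition of $J'$.
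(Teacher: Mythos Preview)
Your proof is correct and essentially the same as the paper's. The radical inclusion argument is identical to the paper's, and for the socle inclusion the paper gives a one-line direct argument (observing that $y(\soc^A_{-n}Q)$ is a homomorphic image of $\soc^A_{-n}Q/\soc^A_{-1}Q$, hence has socle length at most $n-1$) in place of your induction, but these are minor stylistic variations on the same idea.
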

\begin{proof} We first prove that $J'\rad_A^nQ\subseteq\rad_A^{n+1}Q$. Let $z\in J'$, so that $zQ\subseteq \rad_A Q$. Then
$z\rad_A^nQ=z(\rad A)^nQ=(\rad A)^nzQ\subseteq(\rad A)^n(\rad A)Q=\rad^{n+1}_AQ$.

Next, let $y\in J$, then $y(\soc_{-n}^n)$ has socle length at most
$n-1$, since it is a homomorphic image of $\soc^A_{-n}Q/\soc^A_{-1}Q$. Hence, $y\soc^A_{-n}Q$ is
 contained $\soc^A_{-n+1}Q$. 
 
 The final assertion regarding the nilpotence of $J$ and $J'$ is obvious. \end{proof}

\begin{cor}\label{firstcor}If $\head_AQ:=Q/\rad_AQ$ is irreducible, then $J\subseteq J'=\rad B$. If $\soc^AQ$ is irreducible, then $J'\subseteq J=\rad B$.
In particular, if both $\head_A Q$, $\soc^AQ$ are irreducible, then $J=J'=\rad B$ and, for all $n\geq 0$,
$$\begin{cases}\rad B\rad^n_AQ\subseteq\rad^{n+1}_AQ, \,{\text{\rm and}}\\
\rad B\soc^A_{-n}Q\subseteq\soc^A_{-n+1}Q.\end{cases}$$
\end{cor}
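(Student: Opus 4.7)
The plan is to deduce the corollary from Lemma \ref{firstlemma} together with Schur's lemma applied to the natural actions of $B$ on the head and socle of $Q$.

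First I would observe that Lemma \ref{firstlemma} already gives half of what we need: both $J$ and $J'$ are nilpotent two-sided ideals of the finite dimensional algebra $B$, hence $J,J'\subseteq\rad B$ automatically. So the work is to prove the reverse containments $\rad B\subseteq J'$ (in the first case) and $\rad B\subseteq J$ (in the second).

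Next, suppose $\head_A Q$ is irreducible. By definition of $J'$, the quotient $B/J'$ acts faithfully by $A$-endomorphisms on $\head_A Q$, giving an embedding $B/J'\hookrightarrow\End_A(\head_A Q)$. By Schur's lemma, $\End_A(\head_A Q)$ is a division ring (here using that $k$ is algebraically closed makes it just $k$, but this is unnecessary). Hence $B/J'$ is a finite dimensional subring of a division ring; since every nonzero element of such a subring satisfies a monic polynomial over $k$ with nonzero constant term, it is invertible, so $B/J'$ is itself a division ring. In particular $B/J'$ is semisimple, giving $\rad B\subseteq J'$. Combined with $J'\subseteq\rad B$, this yields $J'=\rad B$, and then the nilpotence of $J$ gives $J\subseteq\rad B=J'$. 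The symmetric argument, with $\soc^A Q$ in place of $\head_A Q$, proves the statement when $\soc^A Q$ is irreducible.

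For the last sentence, if both $\head_A Q$ and $\soc^A Q$ are irreducible, both previous arguments apply and give $J=J'=\rad B$. Substituting $\rad B$ for $J$ and $J'$ in the two containments of Lemma \ref{firstlemma} yields the displayed inclusions. The only subtle point---the ``main obstacle,'' if there is one---is justifying that a finite dimensional subring of a division ring is a division ring; this is where finite dimensionality of $A$ (and hence of $B$) is essential.
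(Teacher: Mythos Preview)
Your proof is correct. The paper states the corollary without proof, treating it as an immediate consequence of Lemma~\ref{firstlemma}; your argument via Schur's lemma (showing $B/J'$ embeds in $\End_A(\head_A Q)$ and is therefore a division ring, hence $J'\supseteq\rad B$) is exactly the natural way to fill in the omitted details.
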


 \subsection{Stability}
 For an abstract group $G$ with normal subgroup $N$, let $Q$ be a $kN$-module defined by
 a homomorphism  $\rho=\rho_N=\rho_{N,Q}:N\to GL_k(Q)$. For
$g\in G$, let $\rho^g:N\to GL(Q)$ be the representation defined by $\rho^g(n)=\rho(gng^{-1})$. Then $Q$ is called $G$-stable if
$\rho$ is equivalent to $\rho^g$, for all $g\in G$. Explicitly, this means there is a mapping $\alpha:G\to GL_k(Q)$
such that
\begin{equation}\label{stable}\begin{cases} (1)\,\,
\alpha(g)\rho(n)=\rho({^gn})\alpha(g),\quad\forall g\in G, n\in N\\
(2) \,\,\alpha(1)=1=1_Q.\end{cases}\end{equation}
(Here $^gn:=gng^{-1}$.)
We often write $Q^g$ for the $kN$-module defined by $\rho^g$. The underlying vector space of $Q^g$ 
is $Q$. Equation (\ref{stable})(1) says that, for each $g\in G$,
$\alpha(g)$ is an $N$-module isomorphism from $Q$ to $Q^g$; we also write $\alpha(g):\rho\overset\sim\to\rho^g$.
 Replacing $\alpha$ by $\alpha(1)^{-1}\alpha$, we can (and always
will) assume that $\alpha(1)=1$, which is condition (\ref{stable})(2).   If $Q$ is a $G$-module (by an action extending that of $N$), then $Q|_N$ is obviously $G$-stable. 

In the sequel, we call $\alpha:G\to GL_k(Q)$ a {\it structure map} for the $G$-stable module $Q$. In general, it is
{\it not} uniquely determined by the conditions (\ref{stable}). In various situations discussed below, further conditions will be required of a structure 
map $\alpha$. 

\subsubsection{Rational stability} We recast the elementary notion of stability for abstract groups into
the context of $k$-group schemes, or even $k$-groups.

      Thus, suppose $N$ is a closed, normal $k$-subgroup scheme of a $k$-group scheme $G$, and let $Q$ is a finite dimensional rational $N$-module, i.~e., there is a homomorphism $\rho:N\to GL_k(Q)$ of $k$-groups. For any commutative $k$-algebra $S$, the homomorphism $\rho$ induces (by restriction to the category of commutative $S$-algebras) a homomorphism $\rho_S=\rho_{N,Q,S}:N_S\to GL_S(Q\otimes S)$ of $S$-groups. (Here $N_S(T)=N(T)$, for any commutative $S$-algebra $T$.) Thus, $\rho_S$ defines an $N_S$-representation.
For any $g\in G(S)$, there is also an $N_S$-representation $\rho^g_S$ defined as above.
We say that $\rho_S$ is $G(S)$-stable provided
$\rho_S$ is equivalent to $\rho_S^g$, for all $g\in G(S)$. In addition, we say $Q$ is rationally $G$-stable provided
equivalences $\alpha_S(g):\rho_S\overset\sim\to \rho_S^g$ can be chosen to be functorial in $S$.
In other words, $\alpha_S$ is the value at $S$ of a natural transformation from the $k$-functor $G$ to the $k$-functor $GL_k(Q)$---that
 is, $\alpha$ is a $k$-functor morphism---with an additional property. This additional property may be described as the equality of two
 functor morphisms $G\times N\begin{smallmatrix} \to\\ \to\end{smallmatrix} GL_k(Q)$, where the top (resp., bottom)
morphism sends an $S$-point $(g,n)$ to $\alpha_S(g)\rho_S(n)$ (resp., $\rho_S(gng^{-1})\alpha_S(g)$).  The
same property may be expressed diagrammatically for $k$-schemes without the use of functors, starting from a $k$-scheme
morphism $\alpha:G\to GL_k(Q)$. As in the abstract group case, we can (and always will) assume the additional condition that $\alpha_S(1)=1_{Q}$, for all commutative $k$-algebras $S$. (This, too, can
be expressed diagrammatically.)

In practice, we will blur the distinction between the $k$-scheme $G$ and the associated $k$-functor, trusting the reader can
determine the intent from context.  If $Q$ is rationally $G$-stable, through a morphism $\alpha:G\to GL_k(Q)$ of $k$-schemes as above, we say that $\alpha$ affords (or gives) the rational $G$-stability of $Q$. Alternatively, we
say $\alpha$  is a structure map demonstrating the rational $G$-stability of $Q$. Rational $G$-stability of $Q$ is a subtle issue.

Moreover, there is an additional subtlety. In the abstract group case, it is easy also to arrange\footnote{For example, let $\overline G$ be a set of coset representatives for $N$ in $G$ with $1\in\overline G$. Given $g\in G$, let  $\overline g\in \overline G$
belong to the coset to which $g$ belongs. Given $\alpha:G\to GL_k(Q)$, define $\alpha':G\to GL_k(Q)$ by $\alpha'(\overline{g}m)=
\alpha(\overline g)\rho(m)$, for $\overline g\in \overline G$ and $m\in N$. Then, given $g\in G, n\in N$, write $g=\overline g m$ with $m\in N$, so that $\alpha'(gn)=\alpha'(\overline g)\rho(mn)=\alpha'(\overline g)\rho(m)\rho(n)=\alpha'(\overline g m)\rho(n)=\alpha'(g)\rho(n)$. 

One checks that $\alpha'(g):Q\to Q^g$ is an equivalence of $N$-modules (equivalently, satisfies the
condition (\ref{stable})(1)), using the corresponding property for $\alpha$; also, $\alpha'(1)=1$ .} that the following two
equivalent\footnote{The conditions are equivalent in the presence of (\ref{stable})(1). Also, both conditions
together imply (\ref{stable})(1).} conditions hold:
\begin{equation}\label{compatible}\begin{cases} \alpha(gn)=\alpha(g)\rho(n),\\
\alpha(ng)=\rho(n)\alpha(g),\end{cases}\quad\forall n\in N, g\in G.\end{equation}
 This fact implies, in particular, that $\alpha|_N=\rho$, since $\alpha(1)=1$. We sometimes refer
 to second equation above as ``the left-hand version" of (\ref{compatible}). 

 Returning to the  $k$-group or $k$-group scheme case, we can at least imitate the abstract
group arrangement by requiring, for some $\alpha$ affording a
rational $G$-stability, that the same equation  $\alpha_S(gn)=\alpha_S(g)\rho_S(n)$ (equivalently, $\alpha_S(\rho_S(n)g)=\rho_S(n)\alpha_S(g)$) holds, for all $S$-points $g\in G(S),n\in N(S)$, and
for all commutative $k$-algebras $S$. 

Another equivalent requirement is that there is a commutative diagram
   \begin{equation}\label{strong}\begin{CD}  G\times N @>^{\text{\rm mult}}>> G \\
@V{\alpha\times 1_N}VV  @VVV^\alpha\\
  GL_k(Q)\times N @>>_{{\text{\rm mult}}\circ(1\times\rho)}>  GL_k(Q)\end{CD}\end{equation}
  of $k$-schemes (or $k$-functors).
In this case,  $Q$ is called {\it strongly $G$-stable}. (In particular, strong $G$-stability includes,  by definition, rational
$G$-stability.)

%To help motivate the next notion, let $\rho:N\to GL_k(Q)$ be a $G(k)$-stable representation. For $g\in G$, let %$\alpha(g):Q\overset\sim\to Q^g$ be fixed. We have
 % \begin{equation}\label{soc} \soc^NQ^g=(\soc^NQ)^g\cong\soc^NQ.\end{equation}
  %It follows that $\alpha(g)$ stabilizes $\soc^NQ$. We focus attention on the case in which $\soc^NQ$ is a rational
  %$G$-module such that, for $g\in G$, $v\in\soc^NQ$, $g\cdot v=\alpha(g)\cdot v$.

  Next, suppose that $Q$ is rationally $G$-stable, and consider $G$-submodules $V$ of $Q$, i.~e., rational $G$-modules $V$ such that $V|_N\subseteq Q$.   The pair $(Q,V)$ is called rationally $G$-stable if a morphism $\alpha:G\to GL_k(Q)$ affording the rational $G$-stability
  of $Q$ can be chosen so that the diagram (with the evident maps)
  \begin{equation}\label{ratstab}
  \begin{array}{ccccc}

G\times Q &

\stackrel{}{\begin{picture}(20,0)\put(0,4){\vector(1,0){20}}\end{picture}}

& GL_k(Q)\times Q &

\begin{picture}(20,0)\put(0,4){\vector(1,0){20}}\end{picture}

& Q \\[1mm]

\begin{picture}(0,20)\put(0,0){\vector(0,1){20}}\end{picture}

&&&&

\begin{picture}(0,20)\put(0,0){\vector(0,1){20}}\end{picture}

\\[1mm]

G\times V &

\multicolumn{3}{c}{\begin{picture}(65,0)\put(0,4){\vector(1,0){65}}\end{picture}}

& V

\end{array}
  \end{equation}
  commutes.    If, in addition, $\alpha$ can be chosen demonstrating that $Q$ is strongly $G$-stable (i.~e., the diagram
   (\ref{strong}) is commutative),
the pair $(Q,V)$ is called {\it strongly $G$-stable.} Strongly $G$-stable pairs play an especially important
role in this paper. For the convenience of the reader, we record, in terms of $S$-points, all the conditions
for a $k$-scheme morphism $\alpha:G\to GL_k(Q)$  to be a structure map demonstrating strong
$G$-stability for the pair $(Q,V)$:
\begin{equation}\label{Spoint}
\begin{cases} (1)\quad \alpha_S(g)\rho_S(n)=\rho_S({^gn})\alpha_S(g), \alpha_S(1)=1_{Q_S}\quad{\text{(rational $G$-stability of}}\,\,Q);\\
(2)\quad \alpha_S(gn)=\alpha_S(g)\rho_S(n)\quad{\text{\rm (diagram (2.2.3))}}; \\
(3) \quad \alpha_S(g)v=gv \quad{\text{\rm (diagram (2.2.4)),}}\\
\end{cases}
\end{equation}
for all $g\in G(S), n\in N(S), v\in V(S)$, and commutative $k$-algebras $S$. Here $1_{Q_S}$ denotes the
identity in $GL_S(Q_S)$.

In Section 3, all the strongly $G$-stable pairs $(Q,V)$ encountered will satisfy the additional condition that
$\soc^NQ\subseteq V$. This implies that $\soc^NQ=\soc^NV$ has a rational $G$-module structure, which is critical
for the argument of Lemma \ref{biglemma}.  

\subsubsection{Numerical stability} Return to the abstract setting of a group $G$, normal subgroup $N$, and a finite dimensional $kN$-module $Q$. We say $Q$ is {\it numerically $G$-stable} provided that exists a $kG$-module $M$ such that $M|_N\cong
Q^{\oplus n}$, for some positive integer $n$. In this case, a Krull-Schmidt argument shows that $Q$ is $G$-stable. Conversely, if
$N$ has finite index in $G$, and if $Q$ is $G$-stable, then it is numerically $G$-stable. In fact, $\res^G_N\ind_N^GQ$
is a direct sum of copies of $Q^g\cong Q$, for $g$ ranging over a set of coset representatives of $N$ in $G$.

Now let $V$ be a $G$-submodule of $Q$, i.~e., $V$ is a $kG$-module whose restriction to $N$ is a submodule of $Q$.  Then the pair $(Q,V)$ is called {\it numerically $G$-stable} if $Q$ is numerically $G$-stable and $M$
can  be chosen so that $M=Q\oplus R$, for some $kN$-module $R\cong Q^{\oplus n-1}$, and such that
$V$ is a $G$-submodule of $M$ contained in $Q$.\footnote{ The definition of a numerically $G$-stable pair is motivated by
both by its applications and its validity in the finite group case: Suppose $N$ has finite index in $G$ and $Q$ is $G$-stable. Then the
pair $(Q,V)$ is numerically $G$-stable, for any $G$-submodule $V$ of $Q$.
For the proof, embed $V$ diagonally in $\res^G_N\ind_N^GQ$ and extend this embedding to $Q$ using $\alpha$. The
image of $Q$ together with the original $N$-module $R$ (obtained from $Q^{\oplus n}=\res^G_N\ind^G_NQ$)   works.}

%The next section considers analogues
%of numerical stability for algebraic groups $G$, where the issues are much more subtle. In fact, we know of
%no analogue of the simple equivalence of $G$-stability with numerical stability in this case. But we do have
%an analogue for strongly $G$-stable pairs $(Q,V)$.

The definitions are easy enough to give for $k$-group schemes. Suppose $G$ is a $k$-group scheme and $N$ is a closed, normal $k$-subgroup scheme.
Let $Q$ denote a finite dimensional rational $N$-module. Then we say that $Q$ is numerically $G$-stable provided there exists a finite dimensional
 rational $G$-module $M$ such that $M|_N\cong Q^{\oplus n}$, for some positive integer $n$. If $V$ is a $G$-submodule of $Q$, the pair $(Q,V)$ is numerically $G$-stable
provided that $Q$ is numerically $G$-stable. (It should always be clear from context that the
 modules involved are intended to be rational.)

\subsubsection{Tensor stability}\label{tensor}  Again we start with the abstract setting. Let $N$ be a normal subgroup of a group $G$. A finite dimensional $kN$-module
$Q$ is called {\it tensor $G$-stable} provided there exists a nonzero finite dimensional $kG/N$-module $Y$ such that $Q\otimes Y$ is a $G$-module whose restriction
to $N$ identifies with $Q\otimes Y|_N$.
 If $V$ is a $G$-submodule of $Q$, then the pair $(Q,V)$ is called {\it tensor $G$-stable} provided
$Q$ is tensor $G$-stable by a nonzero $G/N$-module $Y$ such that $V\otimes Y$ is a $G$-submodule of $Q\otimes Y$.

Tensor stability for $Q$ is equivalent to numerical stability for $Q$: If $Q\otimes Y$ is a $kG$-module (extending the
 action of $N$) for some nonzero finite
dimensional  $kG/N$-module $Y$, then $Q^{\oplus n}$ is a $kG$-module (extending the action of $N$) if $n=\dim Y$. Conversely, if $Q^{\oplus n}\in kG$-mod,
for some positive integer $n$, then $Q\otimes Y\in kG$-mod, taking $Y\in kG/N$-mod to be $n$-dimensional with
trivial $G/N$-action.

A similar equivalence holds for pairs $(Q,V)$ as a consequence of the following lemma. After we discuss the
above definition in the $k$-group scheme setting, we will observe that the lemma holds in that context as well.

\begin{lem}\label{help} Suppose that $(Q,V)$ is tensor $G$-stable, for some $N$-module $Q$ and $G$-submodule $V$. Then the pair $(Q,V)$ is
numerically $G$-stable. Conversely, if a pair $(Q,V)$ is numerically $G$-stable, it is tensor $G$-stable.\end{lem}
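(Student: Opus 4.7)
The forward direction is the substantive one, and I would approach it via a \emph{trace element} construction. Let $Y$ be the $G/N$-module furnished by tensor $G$-stability of $(Q,V)$, so that $Q\otimes Y$ carries a $G$-module structure extending its natural $N$-structure, with $V\otimes Y\subseteq Q\otimes Y$ a $G$-submodule. Define
\[
M := (Q\otimes Y)\otimes Y^*.
\]
Since $Q\otimes Y$ is a $G$-module and $Y^*$ is a $G/N$-module (viewed as a $G$-module by inflation), $M$ is a $G$-module. Because $N$ acts trivially on $Y\otimes Y^*$, one has $M|_N\cong Q^{\oplus n^2}$, where $n=\dim Y$.

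The key point is that the canonical trace element $t:=\sum_i y_i\otimes y_i^*\in Y\otimes Y^*$ is $G/N$-invariant, hence $G$-invariant. Thus the $N$-linear map $Q\to M$ given by $q\mapsto q\otimes t$ embeds $Q$ as the $N$-submodule $Q\otimes t$; extending $t$ to a basis of $Y\otimes Y^*$ yields an $N$-complement isomorphic to $Q^{\oplus n^2-1}$. Restricting to $V$, the map $v\mapsto v\otimes t$ identifies $V$ with $V\otimes t\subseteq V\otimes Y\otimes Y^*\subseteq M$, which is a $G$-submodule: $V\otimes Y\otimes Y^*$ is the $G$-submodule $V\otimes Y$ of $Q\otimes Y$ tensored with the $G$-module $Y^*$, and $V\otimes t$ is the image of the $G$-linear embedding $V\hookrightarrow V\otimes Y\otimes Y^*$ determined by the $G$-invariant element $t$. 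So $V$ sits inside the copy $Q\otimes t$ of $Q$ as a $G$-submodule of $M$, proving numerical $G$-stability of the pair $(Q,V)$.

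For the converse, suppose $M$ witnesses numerical $G$-stability of $(Q,V)$, so $M|_N\cong Q\oplus R$ with $R\cong Q^{\oplus n-1}$ and $V\subseteq Q\subseteq M$ is a $G$-submodule. I would take $Y$ to be the trivial $n$-dimensional $G/N$-module. Then $Q\otimes Y\cong Q^{\oplus n}\cong M|_N$ as $N$-modules, and any $N$-isomorphism $\phi\colon Q\otimes Y\xrightarrow{\sim} M|_N$ transfers the $G$-structure of $M$ to $Q\otimes Y$. Choosing $\phi$ to match the $V$-isotypic data of $M|_N$ (for instance, arranging $\phi(V\otimes Y)$ to coincide with a naturally $G$-stable subspace of $M$, such as the one obtained via evaluation $V\otimes\mathrm{Hom}_N(V,M)\to M$) yields $V\otimes Y$ as a $G$-submodule of $Q\otimes Y$, giving tensor $G$-stability.

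The main obstacle will be in the backward direction: ensuring the $N$-isomorphism $\phi$ can be chosen so that $\phi(V\otimes Y)$ is simultaneously $G$-stable in $M$ and matches the tensor product $G$-action on $V\otimes Y$. The forward direction, by contrast, is cleanly handled by the trace element trick, which converts the $G$-stability of the larger subspace $V\otimes Y$ into $G$-stability of a single copy of $V$ inside a copy of $Q$ in $M$.
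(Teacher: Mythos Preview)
Your forward direction is correct and is exactly the paper's argument: the paper also passes from $Y$ to $Y\otimes Y^*$ and uses the $G$-invariant element $1_Y\in\End_k(Y)\cong Y\otimes Y^*$ (your trace element $t$) to exhibit $V\cong V\otimes 1_Y$ as a $G$-submodule of the $N$-summand $Q\otimes 1_Y$ inside $Q\otimes Y\otimes Y^*$. Note that this step tacitly uses that the $G$-action on the submodule $V\otimes Y$ of $Q\otimes Y$ is the tensor product action on $V\otimes Y$ (as in Lemma~\ref{specialcase}(a)(2)); without that, $G$-invariance of $t$ alone would not force $g\cdot(v\otimes t)=(gv)\otimes t$.

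For the converse, your approach is again the paper's: take $Y$ to be the trivial $n$-dimensional $G/N$-module and identify $Q\otimes Y$ with $M$. The paper's proof is literally the single sentence ``let $Y$ be the $n$-dimensional $G/N$-trivial module,'' and it does not address the obstacle you raise---namely, arranging that the specific subspace $V\otimes Y=V^{\oplus n}$ be $G$-stable (with the correct tensor action) under the transported $G$-structure, when numerical stability only guarantees one $G$-stable copy of $V$. Your worry is legitimate, and your suggested repair via the evaluation map $V\otimes\Hom_N(V,M)\to M$ is not sufficient as stated: $\Hom_N(V,M)$ need not have dimension $n$, and even when the image is a $G$-submodule, it need not coincide with $V^{\oplus n}$ in a fixed $N$-decomposition $Q^{\oplus n}$. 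In short, you have matched the paper exactly, including its brevity on the converse; the converse is not invoked elsewhere in the paper (only the implication tensor $\Rightarrow$ numerical is used, in the proof of Theorem~\ref{realmaintheorem}).
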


\begin{proof} Suppose that $(Q,V)$ is tensor $G$-stable by means of a nonzero finite dimensional $G/N$-module $Y$. Then $(Q,V)$
is also tensor $G$-stable using the nonzero $G/N$-module $Y\otimes Y^*$. But $V\otimes Y\otimes Y^*\cong V\otimes\End_k(Y)$ contains
$V\cong V\otimes 1_V$ as a $G$-submodule, so that, if $n=\dim Y\otimes Y^*$, then $Q^{\oplus n}$ contains an $N$-direct
summand $Q$ containing $V$ as a $G$-submodule.

Conversely, suppose $(Q,V)$ is numerically $G$-stable. With the notation above defining numerical 
stability, let $Y$ the $n$-dimensional $G/N$-trivial module. \end{proof}

There may be multiple $G$-structures on $Q^{\oplus n}$, especially for different choices
of $n$. Indeed, it is common in the Clifford theory of finite group representations \cite{Cline} to consider the
category $\mathcal C$ of all finite dimensional $kG$-modules $M$
with $M|_N\cong Q^{\oplus n}$, for some fixed $N$-module $Q$. Tensor products play an important role, and the category $\mathcal C$ is stable under tensor products
by $G/N$-modules. In particular, it can contain many non-isomorphic $G$-modules (a fact we have exploited in a positive way in
Lemma \ref{help}).

The tensor stability notion and Lemma \ref{help}
have analogous versions if $G$ is a $k$-group scheme and $N$ is a closed, normal $k$-subgroup scheme.
In fact, the definitions as well as the statement and proof of Lemma \ref{help} carry over verbatim, working
in the categories of rational modules.

Once again, it is interesting to consider the category of finite dimensional rational $G$-modules $M$ such that $M|_N
\cong Q^{\oplus n}$, for some positive integer $n$ and fixed rational $N$-module $Q$. We will see that, while the
goal of Theorem \ref{MainTheorem} is ostensibly numerical stability, its proof is easier to approach via tensor stability.

 \subsection{Schreier systems: the case of abstract groups} We begin by considering various situations involving abstract
 groups and then discuss in the next subsection how to formulate the concepts in the context of  $k$-group schemes.

\subsubsection{The basic set-up} Suppose two groups $G$ and $U$ are given. Schreier---see
\cite[\S15.1]{Hall}---gave conditions for defining a group extension of $G$ by $U$. We follow this procedure closely, though
we use left actions instead of right, and use a formulation more transparently extending to the case of algebraic groups.
There are two ingredients: a ``conjugation action" of $G$ on $U$, and a ``factor set" for this action. The conjugation action
may be viewed as a map $\kappa:G\times U\to U$, $(g,u)\mapsto {^gu}$. We require that 
$$^1u=u\,\,{\text{\rm and}}\,\, {^g(uv)}=({^gu})({^gv}),\quad\forall
u,v\in U, g\in G.$$
 The factor set is a map $\gamma:G\times G\to U$. The pair $(\kappa,\gamma)$
is required to satisfy the following additional conditions:\footnote{As noted in \cite{Hall}, condition (3) is largely simplifying and
can be omitted, provided we also require $^1u=\gamma(1,1)u\gamma(1,1)^{-1}$ instead of $^1u=u$, for all $u\in U$.}
\begin{equation}\label{Schreir}\begin{cases}(1) \quad^g({^hu})=\gamma(g,h)({^{gh}u})\gamma(g,h)^{-1};\\
(2)\quad ^f\gamma(g,h)\gamma(f,gh)=\gamma(f,g)\gamma(fg,h);\\
(3) \quad \gamma(1,1)=1.\end{cases}\,\,\forall f,g,h\in G, \,u\in U.\end{equation}
Thus, if $g\in G$ is fixed,
the map $U\to U$, $u\mapsto {^gu}$, is an automorphism of $U$. Also, the above identities imply that
\begin{equation}\label{cocyclevanishing}\gamma(1,g)=\gamma(g,1)=1,\quad\forall g\in G.\end{equation}
In the special case when $U$ is abelian, $U$ becomes a
(multiplicatively written) abelian group module for $G$, and $\gamma$ defines a classical 2-cocycle\footnote{More precisely,
 $\gamma$ is a ``normalized" $2$-cocycle because of the equations $\gamma(1,h)=\gamma(h,1)=1$, for all $h\in G$. In particular, $(u,g)=(u,1)(1,g)$ for all $u\in U, g\in G$. Also, when $U$ is abelian, the condition (\ref{Schreir})(1) becomes 
 simply
$$ \begin{aligned} (1^\prime)\quad {^g(}{^hu})={^{gh}u}.\end{aligned}
 $$} (and thus
an element in $H^2(G,U)$.) In the general (non-abelian) case, we call $(\kappa,\gamma)$ a Schreier system for the pair $(G,U)$.

A Schreier system $(\kappa,\gamma)$ on $(G,U)$ defines a group structure on the set $U\times G$, with multiplication, inverses, and identity $1$ given explicitly by
\begin{equation}\label{groupoperations}\begin{cases}(1)\quad (x,g)(y,h):=(x\,({^{g}y})\gamma(g,h),gh),\quad x,y\in U, g,h\in G;\\
(2)\quad (x,g)^{-1} :=({\gamma(g^{-1},g)^{-1}}\,({^{g^{-1}}x})^{-1},g^{-1}),\forall x\in U,g\in G ;\\
(3)\quad 1=(1,1).\end{cases}\end{equation}
Denote this extension group by $G^\diamond=G^\diamond(\kappa,\gamma,U)$.
In this way, we obtain an exact sequence
\begin{equation}\label{shortexact}1\to U\to G^\diamond\overset\pi\to G\to 1\end{equation}
of groups. The mapping $\iota:G\to G^\diamond$ defined by $\iota(g)=(1,g)$ provides a set-theoretic section of $\pi$, satisfying
the additional condition $\iota(1)=1$.
Conversely, any exact sequence $1\to U\to G^\flat\overset\pi\to G\to 1$ of groups, together with a set-theoretic section $\iota$ of $\pi$ with $\iota(1)=1$, arises from a Schreier system for $(G,U)$. In fact, identifying $U$ with its image in $G^\flat$, we have an identification $G^\diamond\to G^\flat$ given by
$(u,g)\mapsto u\iota(g)$. It is useful to note, in anticipation of the algebraic group case, that any section $\iota:G\to G^\flat$ of $\pi$ can be easily modified to a section $\iota'$ satisfying $\iota'(1)=1$. (Put $\iota'(g)=\iota(1)^{-1}\iota(g)$,
for example.)

Continuing to follow \cite{Hall}, a Schreier system $(\kappa,\gamma)$ for $(G,U)$ is said to be equivalent to a Schreier system
$(\kappa',\gamma')$ for $(G,U)$ if there is a map $\beta:G\to U$ satisfying
$$\begin{cases} \kappa'(g,x)=\beta(g)\kappa(g,x)\beta(g)^{-1};\\
\gamma'(g,h)=\beta(g)\kappa(g,\beta(h))\gamma(g,h)\beta(gh)^{-1},\end{cases}\,\,\forall x\in U, g,h\in G.$$

The Schreier system $(\kappa,\gamma)$ for $(G,U)$ is called split provided there is a group homomorphism $\sigma:G\to G^\diamond$
such that $\pi\circ\sigma=1_G$. Necessarily, $\sigma(g)=(\beta(g),g)$, for some mapping $\beta:G\to U$, and
$\gamma(g,h)=\kappa(g,\beta(h))^{-1}\beta(g)^{-1}\beta(gh)$, for all $g,h\in G$.

\subsubsection{Inflation} Let $N$ be a normal subgroup of $G$, and let the natural quotient homomorphism $G\to G/N$ be denoted by
$g\mapsto\overline g$. We say that a Schreier system $(\kappa,\gamma)$ for $(G,U)$ is the
inflation of a Schreier system $(\kappa',\gamma')$ for $(G/N,U)$ provided that $\kappa(g,u)=\kappa'(\overline g,u)$
and $\gamma(g,h)=\gamma'(\overline g,\overline h)$, for all $g,h\in G$, $u\in U$. Given any Schreier system
$(\kappa',\gamma')$ for  $(G/N,U)$, these formulas define a Schreier system $(\kappa,\gamma)$ for $(G,U)$---called the
inflation of $(\kappa',\gamma')$ to $G/N$.\footnote{There is a more general notion of inflation, which might be called ``factor set inflation." This requires only that
$\gamma:G\times G\to U$ be defined by inflation of $\gamma':G/N\times G/N\to U$. It is still true that $\iota|_N$ is a group
 isomorphism of $N$ onto its image $\iota(N)$. However,  $\pi^{-1}(N)$ is now only
 a semidirect product $U\ltimes \iota(N)$ of $U$ and $\iota(N)$, so that the conjugation of $\iota(N)$ on $U$ may not
 be trivial. We will have no use of this more general version of inflation in this paper.}
 
  Let $G^\diamond$ be the extension group for the induced Schreier system $(\kappa,\gamma)$.
Let $\iota:G\to G^\diamond$ be the set-theoretic section for $\pi$ defined by $\iota(g)=(1,g)$. Observe that
$\iota|_N$ is a group homomorphism, mapping $N$ isomorphically onto its image $\iota(N)$ which commutes elementwise with $U$, using the identity $\kappa(n,u)=\kappa'(1,u)=u$, for all $u\in U,n\in N$.  The preimage $N^\diamond:=\pi^{-1}(N)$ in $G^\diamond$ is thus naturally isomorphic to 
the group direct product $U\times \iota(N)$.
 
 Finally, (\ref{Schreir})(2) implies that $^g\gamma(g^{-1},g)=\gamma(g,g^{-1})$, so that
 $$\begin{aligned} (1,g)(1,n)(1,g)^{-1} & = (1,gn)(\gamma(g^{-1},g)^{-1},g^{-1})\\
 & = ({^g\gamma}(g^{-1},g)^{-1}\gamma(g,g^{-1}),gng^{-1})\\&= (\gamma(g,g^{-1})^{-1}\gamma(g,g^{-1}),gng^{-1})\\
 &=(1,gng^{-1}),
 \end{aligned}
$$
which easily shows, since $\iota(N)$ commutes elementwise with $U$,  that $\iota(N)$ is normal in $G^\diamond$.

\subsubsection{Schreier systems arising from representations }\label{SchreirRep} Let $N$ be a normal subgroup of a group $G$ and suppose that
$Q$ is a $G$-stable $kN$-module with respect to a mapping $\alpha:G\to GL_k(Q)$ satisfying (\ref{stable}), where $\rho:N\to GL_k(Q)$
defines the action of $N$ on $Q$. Thus, for $g\in G$, $\alpha(g):Q\overset\sim\to
Q^g$ is an isomorphism as $N$-modules, and $\alpha(1)=1_Q$. As pointed out earlier (see footnote 6), we can assume $\alpha$ satisfies the condition (\ref{compatible}) or,
equivalently, the diagram (\ref{strong}) is commutative.

Define $\gamma:G\times G\to GL_k(Q)$ by putting
\begin{equation}\label{repfactorset}
\gamma(g,h):=\alpha(g)\alpha(h)\alpha(gh)^{-1}\in GL_k(Q),\,\forall g,h\in G.\end{equation}
 Observe that $\gamma(g,h)$ satisfies $^{\gamma(g,h)}\rho(n)=\rho(n)$,
for all $n\in N$. Thus, $\gamma(g,h)\in \text{Aut}_{kN}(Q)\subseteq GL_k(Q)$.  Also, $\alpha(g)\text{Aut}_{kN}(Q)\alpha(g)^{-1}=
\text{Aut}_{kN}(Q)$. Let $U$ be any subgroup of $\text{Aut}_{kN}(Q)$, such as $U=\text{Aut}_{kN}(Q)$, which is stable under all
conjugations by all elements $\alpha(g)$, and contains all $\gamma(g,h)$. Identify $\gamma$ with the map
$G\times G\to U$ it induces, keeping the same notation. Define $\kappa:G\times U\to U$ by
putting
\begin{equation}\label{kappadefined}\kappa(g,u)=
\alpha(g)u\alpha(g)^{-1},\quad\forall g\in G, u\in U.\end{equation}
 Then $(\kappa,\gamma)$ is a Schreier system for the pair $(G,U)$. In fact,
$(\kappa,\gamma)$ is clearly inflated from a Schreier system for $(G/N,U)$. Let $G^\diamond=G^\diamond(\kappa,\gamma)$ denote the extension group of $G$ by $U$ defined
by $(\kappa,\gamma)$. We also write $G^\diamond=G^\diamond(\alpha,U)$, noting that $\kappa$ and
$\gamma$ are determined by $G,\alpha,U$. Then $G^\diamond$ acts naturally on $Q$ by $k$-automorphisms, viz., 
$$(u,g)q:=u\alpha(g)q,\quad u\in U,g\in G,
q\in Q.$$
 Let $\rho:G^\diamond\to GL_k(Q)$ denote this representation
In addition, $N^\diamond=\pi^{-1}(N)\cong U\times N$, so that $N$ can be naturally regarded as a 
normal (as noted above) subgroup of $G^\diamond$. The
action of $G^\diamond$ on $Q$ restricts to the original action of $N$ on $Q$ (and both representations are denoted by $\rho$).

Regarding $G$ as a subset of $G^\diamond$ (via $g\mapsto (1,g)$), $\alpha(g)=\rho(g)$, for all $g\in G$, the original $G$-stability of the $kN$-module $Q$ may be recovered from the structure of $G^\diamond$-module structure on $Q$. More generally,
we have the following result. Note the present group $G^\diamond$ fits the hypothesis of the proposition with $Q=Q'$, $\rho=\rho'$,
and $\alpha=\alpha'$.

\begin{prop}\label{notneeded} Let $G^\diamond$ be any group constructed from a Schreier system $(\kappa,\gamma)$ for a pair $(G,U)$ which is
the inflation from a Schreier system for $(G/N,U)$.  Regard $G$ as a subset of $G^\diamond$ as
above. Then any $kG^\diamond$-module $Q'$ is naturally a $G$-stable module for $N$,
afforded by the map $\alpha'=\rho'|_G:G\to GL_k(Q')$, where $\rho':G^\diamond\to GL_k(Q')$ defines
the $G^\diamond$-structure on $Q'$. This map $\alpha'$ automatically satisfies the condition (\ref{compatible}) or, equivalently,
the diagram (\ref{strong}) commutes. \end{prop}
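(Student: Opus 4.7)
The plan is to exploit the inflation hypothesis to trivialize the factor set $\gamma$ whenever one of its arguments lies in $N$, and then to read off (\ref{compatible}) directly from the group operation (\ref{groupoperations})(1) in $G^\diamond$. The single key observation is that, since $(\kappa,\gamma)$ is inflated from a Schreier system $(\kappa',\gamma')$ for $(G/N,U)$ and $\overline n = 1$ in $G/N$ for every $n\in N$, the normalization (\ref{cocyclevanishing}) applied to $\gamma'$ forces
\[
\gamma(g,n)=\gamma'(\overline g,1)=1,\qquad \gamma(n,g)=\gamma'(1,\overline g)=1,\qquad\forall g\in G,\,n\in N.
\]
This is the one place the inflation hypothesis is genuinely used; everything that follows is formal.

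Next I would substitute these vanishings into (\ref{groupoperations})(1) to compute
\[
(1,g)(1,n) = (1\cdot{}^g 1\cdot\gamma(g,n),\,gn) = (1,gn),\qquad (1,n)(1,g)=(\gamma(n,g),\,ng) = (1,ng).
\]
Thus the set-theoretic section $\iota\colon g\mapsto(1,g)$ satisfies $\iota(g)\iota(n)=\iota(gn)$ and $\iota(n)\iota(g)=\iota(ng)$ for all $g\in G$, $n\in N$ (specializing to $g\in N$ recovers the remark in the text that $\iota|_N$ is a homomorphism, so $\rho(n):=\rho'(\iota(n))$ is a well-defined $kN$-action on $Q'$). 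Setting $\alpha'(g):=\rho'(\iota(g))$ and applying the representation $\rho'$ to the two displayed identities yields immediately
\[
\alpha'(gn)=\alpha'(g)\rho(n),\qquad \alpha'(ng)=\rho(n)\alpha'(g),
\]
which are precisely the two equations (\ref{compatible}); equivalently, the diagram (\ref{strong}) commutes.

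Finally, (\ref{stable})(1) is a formal consequence: rewriting $gn = {}^g n\cdot g$ and comparing the two evaluations $\alpha'(gn)=\alpha'(g)\rho(n)$ and $\alpha'({}^g n\cdot g) = \rho({}^g n)\alpha'(g)$ gives $\alpha'(g)\rho(n) = \rho({}^g n)\alpha'(g)$, so each $\alpha'(g)$ is an $N$-isomorphism $Q'\overset\sim\to (Q')^g$, while (\ref{stable})(2) is just $\alpha'(1) = \rho'(1,1) = 1_{Q'}$. The main --- indeed essentially only --- obstacle is convincing oneself that the inflation hypothesis is what makes the cocycle vanish on $G\times N$ and $N\times G$; without it, the ``mixed'' products $(1,g)(1,n)$ and $(1,n)(1,g)$ would pick up nontrivial factors from $U$ and $\alpha'$ would generally fail both equations of (\ref{compatible}).
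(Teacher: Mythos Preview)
Your proof is correct and is precisely the ``routine verification'' that the paper's proof declines to write out. The key computation---that inflation forces $\gamma(g,n)=\gamma(n,g)=1$ via (\ref{cocyclevanishing}), whence $\iota(g)\iota(n)=\iota(gn)$ and $\iota(n)\iota(g)=\iota(ng)$---is exactly what underlies the paper's earlier remark (in \S2.3.2) that $\iota|_N$ is a group homomorphism and that $(1,g)(1,n)(1,g)^{-1}=(1,{}^gn)$; you have simply organized these facts so that (\ref{compatible}) drops out directly, and then observed that (\ref{stable})(1) follows formally from the two equations of (\ref{compatible}).
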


\begin{proof}  Since $N$ is naturally a subgroup
of $G^\diamond$ (as described above), $\rho'|_N$ defines $Q'$ as an $N$-module. It is routine to verify that $\alpha'$ satisfies (\ref{stable}) and (\ref{compatible}), replacing $\alpha,\rho$ by $\alpha',\rho'$.
\end{proof}

\subsection{Schreier systems: the case of $k$-group schemes} The above discussion of Schreier systems has been for abstract groups.
It remains to discuss how all this works for $k$-group schemes.
Suppose that $G$ and $U$ are $k$-group schemes, or, more generally, $k$-group functors.
(In \S3, $G$ and $U$ will both be algebraic groups.)

\subsubsection{The basic set-up}
The definition of
a Schreier system in (\ref{Schreir}) is easily imitated with $\kappa:G\times U\to U$ and $\gamma:G\times G\to U$
required to be maps of $k$-functors, and the required  conditions (such as (\ref{Schreir})) interpreted at
the level of $S$-points, for any commutative $k$-algebra $S$. In this case, $G^\diamond$ acquires the structure of
a $k$-group functor with underlying $k$-functor $U\times G$.

Alternately, the $k$-group scheme case requires $\kappa:G\times U\to U$ and $\gamma:G\times G\to U$ to be
morphisms of $k$-schemes, with all conditions interpreted diagrammatically. For instance, condition (1) of
(\ref{Schreir}) requires the equality of two morphisms $G\times G\times U\to U$, while condition (3) requires
the equality of two maps $e\times e\to U$  (where $e$ is the trivial $k$-group scheme), namely,
 $$\begin{CD} e\times e @>>> G\times G \\
@VVV @ VV\gamma V \\
e @>>> U\end{CD}
$$
In this case, $G^\diamond$ becomes a $k$-group scheme, affine when $G$ and $U$ are affine, and an algebraic group
when $G$ and $U$ are algebraic groups. The remaining details are left to the reader.

\subsubsection{Inflation} When $N$ is a closed, normal $k$-subgroup scheme of $G$, the above discussion on inflation carries through  essentially unchanged, with only some attention to the map $\iota:N\to G^\diamond$. For $k$-group schemes, this is the
$k$-group scheme map given by the composite $N\hookrightarrow G\to e\times G\hookrightarrow U\times G=G^\diamond$. Because $\iota$ is split, it may be factored as an isomorphism followed
by the inclusion of a closed $k$-subgroup scheme, the latter called
$\iota(N)$. Alternately,
we can
get a $k$-functor map $N\to G^\diamond$ in the usual way from the abstract group case.  The splitting again implies a factorization for this map, and $\iota(N)(S)$ may
be taken as the
(isomorphic) image, in the sense of sets and abstract groups, of
the map $N(S)\to G^\diamond(S)$, for any commutative $k$-algebra $S$. This point of view makes it very clear that
$\iota(N)$ is normal in $G^\diamond$.

\subsubsection{Schreier systems arising from representations}Continue to let $N$ be a closed, normal $k$-subgroup scheme of $G$, and
let $Q$ be a finite dimensional rational $N$-module. Assume that $Q$ is strongly $G$-stable, so that there is a morphism
$\alpha:G\to GL_k(Q)$  proving a commutative diagram (\ref{strong}) (or, at the level of $S$-points, for
a commutative $k$-algebra $S$, that
condition (\ref{compatible}) holds). Then $\gamma:G\times G\to GL_k(Q)$, defined as in (\ref{repfactorset}), 
is a morphism of $k$-schemes.  Assume $U$ is a closed subgroup of $\Aut_N(Q)$ which is stable under conjugation by all  elements
$\alpha(g)$, $g\in G$, and ``contains all $\gamma(g,h)$, $g,h\in G$."  Then
$\kappa:G\times U\to U$, defined in (\ref{kappadefined}), is also a morphism
of $k$-schemes. The first condition can be phrased either by a requirement on
$S$-points, for all commutative $k$-algebras $S$, or as a condition on the evident morphism $G\times U\to GL_k(Q)\times U\to GL_k(Q)$, that it factor through the inclusion $U\hookrightarrow GL_k(Q)$. Similar formulations may be given for the second
condition. Then the $k$-group scheme $G^\diamond=G^\diamond(\alpha,U)$ is defined by the
same construction as in the abstract case.

The following explicit set-up will be studied in \S3. Let $G$ be an algebraic group over $k$ and let $N$ be a closed, normal subgroup. Suppose we are given a strongly $G$-stable pair $(Q,V)$ where $V$ is a rational $G$-submodule of $Q$
containing $\soc^NQ$. Let $J_V=J_V^Q\subseteq\End_N(Q)$ be the annihilator of $V$. By Lemma \ref{firstlemma}, $J_V$ is a nilpotent
ideal, so that $1_Q+J_V$ is a closed (unipotent) $k$-subgroup scheme of $GL_k(Q)$, which we denote by $U$. Conditions (\ref{compatible}) imply that $\gamma(g,h)\in U$, for all $g,h\in G$, and that $U$ is stable under conjugation by all $\alpha(g)$, $g\in G$. The corresponding extension group $G^\diamond=G^\diamond(\alpha, U)$ will play an important role in what follows. We also write $G^\diamond=G^\diamond(Q,V,\alpha)$, since the group $U$ is determined by the strongly $G$-stable pair $(Q,V)$. (It turns out, the dependence on $\alpha$ can be largely removed; see Theorem \ref{endofsec3}.)  Given a finite dimensional rational $N$-module $Q$ and a rational $G$-submodule $V$ containing $\soc^NQ$, the strong
$G$-stability of $(Q,V)$ may often be verified using Lemma
\ref{biglemma}. Also, strong $G$-stability of $Q$ is a consequence of the existence of a group like $G^\diamond$, even for a different module $Q'$, using the $k$-group scheme analogue of Proposition \ref{notneeded}. (We mention this only for theoretical completeness, and do
not require it later in the paper.)

Finally, suppose $(Q,V)$ is strongly $G$-stable with structure map $\alpha:G\to GL_k(Q)$. Let $Y$
be a finite dimensional rational $G$-module, defined by $\rho=\rho_{G,Y}:G\to GL_k(Y)$. Then $(Q\otimes Y,V\otimes Y)$ is a strongly $G$-stable pair with structure map 
$\widetilde\alpha:=\alpha\otimes \rho_{G,Y}:G\to GL_k(Q\otimes Y)$. (This will most often be used when $Y$ is   rational $G/N$-module regarded as a $G$-module by inflation through $G\to G/N$). In particular, we can form
the group $G^\diamond(Q\otimes Y,V\otimes Y,\widetilde\alpha)$ as above.

\subsection{Steinberg modules and injective modules} In this
section $k$ has positive characteristic $p$, except in Remark \ref{charzero}. Let $\widetilde G$ be a simply connected reductive group over $k$ with
derived group $\widetilde G'$. Let $T'$ be a maximal torus of $\widetilde G'$ contained in a maximal torus $T$ of $\widetilde G$. Pick a set $\Pi$ of simple roots for $\widetilde G'$, and let $\rho$ be the Weyl weight on $T'$, defined as one-half the
sum of the positive roots. Thus, $\rho$ is a dominant weight on $T'$. Since the restriction map $X(T)\to X(T')$ on
character groups is surjective, we can fix a character (or weight) on $T$ whose restriction to $T'$ is the Weyl weight
$\rho$. By abuse of notation, we denote this weight by $\rho$. For a positive integer $n$, let $\St_n=L((p^n-1)\rho)$
be the irreducible (Steinberg) $\widetilde G$-module of highest weight $(p^n-1)\rho$. Let $G'$ be any semisimple
group over $k$ having the same root system as $\widetilde G'$ but not necessarily simply connected. There is an isogeny
$\widetilde G'\to  G'$ having kernel $K$, a finite closed, central $k$-subgroup scheme of $\widetilde G'$ (and $\widetilde G$).  In fact, $K\leq T'$. Let $G:=
\widetilde G/K$, a reductive group with derived group $ G'$. If $p$ is odd, $(p^n-1)\rho$ lies in the root lattice
of $\widetilde G'$ and so $\St_n$ is an irreducible module for $G'$ and hence for $G$. In any event,
$\St_n\otimes \St_n^*$ does have weights in the root lattice of $\widetilde G'$, and so $\St_n\otimes\St_n^*$ is a rational
$G$-module. (If $\widetilde G$ is semisimple, i.~e., if $\widetilde G=\widetilde G'$, then $\St_n\cong \St_n^*$, i.~e., $\St_n$ is self-dual.)

Let $I=\ind_T^{\widetilde G}k$ be the rational $\widetilde G$-module obtained by inducing the trivial module for $T$ to $\widetilde G$. Then $I$ is an
injective object in the category of rational $\widetilde G$-modules. Also, $I$ identifies with $k[\widetilde G]^T$, the fixed points of
$T$ for its right regular action on $k[\widetilde G]$. Since $K\leq T$ is central in $\widetilde G$, $K$ acts trivially on $I$, and $I$ is
a rational (and injective) $ G=\widetilde G/K$-module. Of course, $I$ contains the injective envelope $I(k)$ of the
trivial module. Part (a) of following result is proved in \cite[II.10.13]{Jan}.

\begin{lem}\label{Steinberg} (a) In the category of rational $G$-modules, $I$ is a directed union of $G$-submodules
(or $\overline G$)
$\St_n\otimes\St_n^*\cong\End_k(\St_n)$, i.~e.,
$I=\ind_T^Gk=\underset\longrightarrow\lim\,\St_n\otimes\St_n^*.$

(b)
Thus, given any finite dimensional rational $G$-module $M$ and positive integer $m$, there exists a
rational $G$-module $Y$ such that the natural map $M\to M\otimes \End_k(Y)$, $v\mapsto v\otimes 1$,
is an inclusion (i.~e., $Y\not=0$) and the induced map $H^m(G,M)\to H^m(G,M\otimes\End_k(Y))$ on cohomology
is the zero map.
\end{lem}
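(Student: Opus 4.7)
Part~(a) is exactly \cite[II.10.13]{Jan}, so I focus on part~(b). The plan is to take $Y=\St_n$ for a sufficiently large $n$. The injectivity of the natural map $v\mapsto v\otimes 1$ is automatic once $Y\neq 0$: it is obtained by tensoring $\id_M$ with the unit inclusion $k\hookrightarrow \End_k(Y)$, $1\mapsto 1_Y$, which is an injection of $k$-vector spaces.

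For the cohomological statement, three ingredients combine. First, part~(a) realises $I$ as the directed union $I=\underset{\longrightarrow}{\lim}\,(\St_n\otimes\St_n^*)$, so after tensoring with $M$ one obtains inclusions $M\otimes \St_n\otimes\St_n^*\hookrightarrow M\otimes I$ whose union is $M\otimes I$. Second, since $M$ is finite dimensional and $I$ is injective in the category of rational $G$-modules, the adjunction $\Hom_G(-,M\otimes I)\cong \Hom_G(-\otimes M^*,I)$ shows that $M\otimes I$ is itself injective, hence $H^m(G,M\otimes I)=0$ for every $m\geq 1$. Third, the functor $H^m(G,-)$ commutes with filtered colimits of rational $G$-modules (as one sees from the Hochschild cochain complex $-\otimes k[G]^{\otimes \bullet}$, since both tensor products and the cohomology of a complex of $k$-vector spaces commute with filtered colimits). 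Combining these,
$$\underset{\longrightarrow}{\lim}\,H^m\bigl(G,M\otimes \St_n\otimes\St_n^*\bigr)\;=\;H^m(G,M\otimes I)\;=\;0.$$

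To promote this ``eventual vanishing of each class'' into a single $n$ that kills the whole map $H^m(G,M)\to H^m(G,M\otimes \End_k(\St_n))$, I would invoke the standard fact that $H^m(G,M)$ is finite dimensional when $G$ is reductive and $M$ is finite dimensional rational (a basic feature of rational cohomology as developed in Jantzen~II). Picking a basis $\xi_1,\dots,\xi_d$ of $H^m(G,M)$, each $\xi_i$ dies in the colimit, so at some stage $n_i$ its image already vanishes in $H^m(G,M\otimes \St_{n_i}\otimes\St_{n_i}^*)$; setting $n=\max_i n_i$ and using functoriality of the direct system, every $\xi_i$, and hence all of $H^m(G,M)$, maps to $0$ in $H^m(G,M\otimes\End_k(\St_n))$. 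Taking $Y=\St_n$ finishes the proof.

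The main obstacle I anticipate is precisely this last finite dimensionality input: the ``Steinberg module kills cohomology'' trick yields, a priori, only an $n$ depending on the individual cohomology class, and it is the finite dimensionality of $H^m(G,M)$ that upgrades this to a single $\St_n$ working for the whole group. The rest of the argument is the standard injective-hull-plus-filtered-colimit device made precise by part~(a).
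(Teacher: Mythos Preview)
Your argument is essentially the paper's own proof: it too appeals to part~(a), the injectivity of $M\otimes I$, and the fact that rational cohomology commutes with direct limits; your explicit invocation of the finite-dimensionality of $H^m(G,M)$ to upgrade class-by-class vanishing to a uniform $n$ is a point the paper leaves implicit.

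One small caveat: in the setup of \S2.5, $G=\widetilde G/K$ for a central subgroup scheme $K$ of the simply connected cover $\widetilde G$, and $\St_n$ is a priori only a $\widetilde G$-module. When $p$ is odd it descends to $G$, but when $p=2$ it need not, so your choice $Y=\St_n$ may fail to be a rational $G$-module. The paper handles this by taking $Y=\St_n\otimes\St_n^*$ instead (always a $G$-module), observing that the inclusion $M\hookrightarrow M\otimes\End_k(Y)$ then factors through $M\hookrightarrow M\otimes\End_k(\St_n)$ via $1_{\St_n}\otimes 1_{\St_n^*}=1_Y$, so the same colimit argument still kills the map on cohomology.
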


\begin{proof} If the Steinberg modules $\St_n$ are $G$-modules, we can  take $Y=\St_n$, for some large $n$, by (a). (This works
because rational cohomology commutes with direct limits.) Otherwise, let $Y=\St_n\otimes\St_n^*$. In the latter case, note
that  the
natural inclusion $M\hookrightarrow M\otimes\End_k(Y)$ factors through the inclusion $M\hookrightarrow M\otimes \End_k(\St_n)$,
since $1_{\St_n}\otimes 1_{\St_n^*}=1_{\St_n\otimes \St_n^*}$.
\end{proof}

\begin{rem}\label{charzero} Lemma \ref{Steinberg}(b) holds in characteristic 0, taking $Y$ to be the one-dimensional
trivial module.\end{rem}

\section{Main results}
  The following important result gives a way to establish strong
$G$-stability, especially for injective $N$-modules and suitably
characteristic submodules. This lemma is inspired by the work of Donkin \cite{Donkin1}. A critical ingredient,
appearing implicitly in \cite{Donkin1}, is the use of the $G$-structure on $\soc^NQ$ to guarantee injectivity of the
 map $\alpha(g)$ appearing in the proof below. Providing a setting for this argument is the origin of our $(Q,V)$ formalism. Although the construction
 of the morphism $\alpha$ (due to Donkin) below may seem ad hoc, the converse to the lemma holds.  We will sketch
the argument for this converse in Section 4.

\begin{lem}\label{biglemma}Let $G$ be an affine algebraic group with closed, normal $k$-subgroup scheme $N$. Let $Q$ be a finite dimensional, rational $N$-module. Assume that there exists a rational $G$-module $M$ such that $M|_N\cong Q\oplus R$ for some $R$ in $N$-mod, and that there is a $G$-submodule
$V$ of $M$ contained in $Q$ and containing $\soc^NQ$. Then the pair $(Q,V)$ is strongly $G$-stable.  \end{lem}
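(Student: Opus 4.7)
My plan is to construct the structure map $\alpha\colon G \to GL_k(Q)$ directly from the embedding data, following Donkin's idea. Write $\iota\colon Q \hookrightarrow M$ for the $N$-equivariant inclusion and $\pi\colon M \to Q$ for the $N$-equivariant projection furnished by the decomposition $M|_N \cong Q \oplus R$. For a commutative $k$-algebra $S$ and an $S$-point $g \in G(S)$, I would define
\[
\alpha_S(g) \;:=\; \pi_S \circ g \circ \iota_S \colon Q_S \longrightarrow Q_S,
\]
where $g$ acts on $M_S$ through the rational $G$-module structure on $M$. Since $\iota$, $\pi$, and the action morphism $G \times M \to M$ are all morphisms of $k$-schemes, this prescription is manifestly functorial in $S$ and yields a morphism $\alpha\colon G \to \End_k(Q)$ of $k$-schemes. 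What remains is to show that $\alpha$ factors through the open subscheme $GL_k(Q)$ and that it satisfies the three conditions listed in (\ref{Spoint}).

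The crucial step is the invertibility of each $\alpha_S(g)$, and this is where the hypothesis that $V$ is a $G$-submodule of $M$ containing $\soc^N Q$ enters. A short calculation, using that $g$ intertwines the $N$-action on $M_S$ via conjugation together with the $N$-linearity of $\iota_S$ and $\pi_S$, shows that $\alpha_S(g)$ is $N_S$-linear regarded as a map $Q_S \to Q_S^g$; in particular $\ker\alpha_S(g)$ is an $N_S$-submodule of $Q_S$. On the other hand, for $v \in V_S$ the element $g v$ lies in $V_S \subseteq Q_S$ because $V$ is $G$-stable in $M$, so $\pi_S(gv) = gv$ and thus $\alpha_S(g)v = gv$. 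Hence $\alpha_S(g)$ restricts on $V_S$ to the invertible $G$-action of $g$. Specialising to $S = k$, this shows $\alpha(g)$ is injective on $\soc^N Q \subseteq V$; since any nonzero $N$-submodule of the finite-dimensional $Q$ meets $\soc^N Q$, $\alpha(g)$ is invertible for every $g \in G(k)$. Because $G$ is reduced, the locus in $G$ where $\alpha$ fails to be invertible is a closed subscheme with no $k$-points, hence empty, so $\alpha$ factors through $GL_k(Q)$.

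With $\alpha$ confirmed as a morphism $G \to GL_k(Q)$, the three conditions of (\ref{Spoint}) fall out formally. Condition (2), $\alpha_S(gn) = \alpha_S(g)\rho_S(n)$, follows from the factorisation of the $G$-action at $gn$ on $M_S$ combined with the $N$-linearity of $\iota_S$; this together with the intertwining identity on $M_S$ delivers the rational $G$-stability equation of condition (1), while $\alpha_S(1) = \pi_S \circ \iota_S = 1_{Q_S}$ is immediate. Condition (3), that $\alpha_S(g)$ agrees with the ambient $G$-action on $V_S$, was already observed during the injectivity step. I expect the main obstacle to be this injectivity argument: everything else is essentially bookkeeping once the formula $\pi \circ g \circ \iota$ has been written down, and it is precisely the requirement $\soc^N Q \subseteq V$---coupled with the $G$-stability of $V$---that keeps this map from degenerating.
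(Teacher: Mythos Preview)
Your proposal is correct and follows essentially the same approach as the paper: define $\alpha(g)=\pi\circ g\circ\iota$ (the paper writes $\sigma$ for your $\pi$), observe that $\alpha(g)$ intertwines $\rho$ and $\rho^g$ so that its kernel is an $N$-submodule, and use $\soc^NQ\subseteq V$ together with the $G$-stability of $V$ to force injectivity. The only cosmetic differences are that the paper verifies the $N$-stability of $\ker\alpha(g)$ via an explicit enveloping-algebra computation rather than by first stating the intertwining identity, and that you make the appeal to reducedness of $G$ (to pass from invertibility on $k$-points to a morphism into $GL_k(Q)$) explicit, whereas the paper leaves this implicit in its matrix-coefficient description.
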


\begin{proof} Replacing $M$ by the finite dimensional $G$-submodule generated by $Q$,
 it can be assumed that $M$ is finite
dimensional. Define a map $\alpha:G\to \End_k(Q)$ by setting $\alpha(g)(v)=\sigma(g.v)$, for $v\in Q$, where $\sigma:M\to Q$ is the
projection of $M$ onto $Q$ along $R$. We claim that $\alpha(g)\in GL_k(Q)$ for all $g\in G$. In fact, let
$K$ be the kernel of $\alpha(g)$.  If $A$ denotes the enveloping algebra of $N$ in $\End_k(M)$ (the
image of the distribution algebra of $N$), we have $gAg^{-1}m\subseteq Am$, for each $m\in M$. It 
follows that
$$\alpha(g)(AK)=\sigma(gAg^{-1}gK)= \sigma(AgK)=A\sigma(gK)=\alpha(g)K=0.$$
Thus, the kernel $K$ of $\alpha(g)$ is stable under the multiplication by $A$, so it is an $N$-submodule
of $Q$.
Therefore, $K\cap \soc^NQ\not=0$. Choose a nonzero vector $v\in K\cap\soc^NQ$. Then $\alpha(g)(v)=
\sigma(g.v)=g.v\not=0$, a contradiction. Thus, $\alpha(g)$ is invertible, and we view $\alpha$
as a map $G\to GL_k(Q)$.

Let $e_1,\cdots, e_q,
e_{q+1},\cdots, e_n$ be an ordered basis for $M$ so that $e_1,\cdots, e_q$ (resp., $e_{q+1},\cdots, e_n$) is a basis for $Q$ (resp.,
$R$), and let $g\mapsto [x_{ij}(g)]$ be the corresponding matrix representation of $G$. Thus, each $x_{ij}\in k[G]$. The
$q\times q$-submatrix $[x_{ij}(g)]_{1\leq i,j\leq q}$ defines the action of $\alpha(g)$ on $Q$. Hence, $\alpha:G\to GL_k(Q)$
is a morphism of $k$-schemes.  Straightforward calculations at the level of $S$-points, for commutative
$k$-algebras $S$, show that all the equations of (\ref{Spoint}) hold for the morphism $\alpha$. Thus, $(Q,V)$ is strongly $G$-stable.
\end{proof}
\medskip
\begin{center}{\bf{\large
$\bigstar\bigstar\bigstar$}}\end{center}
\medskip

{\it For the rest of this section,  $G$ is a connected affine algebraic group over the algebraically closed field $k$, and $N$ is a closed,
normal $k$-subgroup scheme. It will often be required that the quotient $k$-group scheme  be a reductive
algebraic group.}

\medskip
\begin{center}{\bf{\large $\bigstar\bigstar\bigstar$}}\end{center}\medskip

 In Lemma \ref{specialcase} and Theorem \ref{realmaintheorem}, $G/N$ is a reductive group.
A typical case arises when $G$ is reductive, and $N$ is
 an infinitesimal subgroup $G_r$ (e.~g., an $r$th Frobenius kernel), for some positive integer $r$ and some ${\mathbb F}_p$-structure on $G$.

The following lemma is key to the  main result, Theorem \ref{realmaintheorem}, and it makes essential use of the homological algebra
of the reductive group $G/N$. The lemma fails for unipotent groups, at least in characteristic 0, as does the theorem; see \S4.2.

\begin{lem}\label{specialcase} Let $N$ be a closed, normal $k$-subgroup scheme of an algebraic group $G$, and assume the quotient group $G/N$ is a reductive algebraic group. Let $Q'$ be a finite dimensional rational $N$-module,
 and let $(Q',V)$ be a strongly $G$-stable pair with $V$ a $G$-submodule of $Q'$
containing both $\soc^NQ'$ and $\rad_NQ'$. 

(a) The pair $(Q',V)$ is tensor $G$-stable, i.~e., there exists a nonzero finite dimensional rational $G/N$-module $Y$ such that $Q'\otimes Y$
has a rational $G$-module structure
with the following properties:

\begin{itemize}
\item[(1)] $(Q'\otimes Y)|_N\cong Q'\otimes Y|_N$, where $Y|_N$ is a trivial $N$-module;

\item[(2)] The subspace $V\otimes Y$ of $Q'\otimes Y$ is a $G$-submodule isomorphic to the tensor product
of the $G$-modules $V$ and $Y$, regarding $Y$ as a $G$-module through inflation through $G\to G/N$.
\end{itemize}
\medskip
(b) Suppose there is a finite dimensional rational $N$-module $Q$ containing $Q'$ as an $N$-submodule, with $(Q,V)$ having a strongly $G$-stable
structure compatible with that of $(Q',V)$, in the sense that 
\begin{equation}\label{COM}\alpha(g)q'=\alpha'(g)q',\quad g\in G, q'\in Q',\end{equation}
for some morphisms $\alpha:G\to GL(Q)$ and $\alpha':G\to GL(Q')$ which are structure maps for $(Q,V)$ and $(Q',V)$,
respectively. Assume also that $\soc^NQ\subseteq V$.  Then a $G/N$-module $Y$ may be chosen so that,
for some rational $G$-module structure on $Q'\otimes Y$, the conclusion of part (a) holds, and
in addition $(Q\otimes Y,Q'\otimes Y)$ has the structure of a strongly $G$-stable pair. \end{lem}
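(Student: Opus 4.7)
The plan is to reinterpret the desired $G$-module structure on $Q'\otimes Y$, and (for part~(b)) the strong $G$-stability of $(Q\otimes Y,Q'\otimes Y)$, as a group-scheme splitting of a suitable nilpotent-kernel extension of $G$, and to kill the splitting obstruction stepwise by an iterated application of the Steinberg-tensoring trick of Lemma~\ref{Steinberg}(b). For part~(a), set $J:=J_V\subseteq\End_N(Q')$; since $V\supseteq\soc^N Q'$, Lemma~\ref{firstlemma} gives $J^m=0$ for some $m\geq 1$, so $U:=1+J$ is a closed unipotent subgroup of $GL_k(Q')$, and the $k$-scheme Schreier construction attached to $\alpha'$ yields an algebraic-group extension
$$1\longrightarrow U\longrightarrow G^\diamond(Q',V,\alpha')\longrightarrow G\longrightarrow 1,$$
inflated from a Schreier system on $G/N$. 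For any finite-dimensional rational $G/N$-module $Y$ (inflated to $G$), the structure map $\alpha'\otimes\rho_Y$ makes $(Q'\otimes Y,V\otimes Y)$ strongly $G$-stable, with parallel extension $\widetilde{G^\diamond}$ of kernel $\widetilde U:=1+J\otimes\End_k(Y)$ and cocycle $\gamma\otimes 1_Y$. A $k$-group-scheme splitting $\sigma:G\to\widetilde{G^\diamond}$ that restricts to $\iota$ on $N$ is precisely the data of a rational $G$-module structure on $Q'\otimes Y$ extending the given $N$-action; and because $\widetilde U$ acts trivially on $V\otimes Y$, any such $\sigma$ automatically realizes $V\otimes Y$ as the tensor product of the $G$-modules $V$ and $Y$, as required by condition~(a)(2).

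To construct $\sigma$, filter $\widetilde U$ by the normal closed subgroups $\widetilde U^{(i)}:=1+J^i\otimes\End_k(Y)$; the successive quotients are vector groups isomorphic to the $G/N$-modules $J^i/J^{i+1}\otimes\End_k(Y)$, the conjugation $G$-action factoring through $G/N$ because $J\subseteq\End_N(Q')$. Build $\sigma$ by induction on $i$: given a splitting modulo $\widetilde U^{(i)}$, the obstruction to lifting it modulo $\widetilde U^{(i+1)}$ lies in $H^2(G/N,\,J^i/J^{i+1}\otimes\End_k(Y))$, and, up to modifying the preceding lift, appears as the image of a class in $H^2(G/N,M)$ under the map induced by $v\mapsto v\otimes 1_Y$, for an appropriate $G/N$-module $M$ constructed from $J^i/J^{i+1}$ together with the previously chosen tensor factors. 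Iteratively applying Lemma~\ref{Steinberg}(b) produces $G/N$-modules $Y_1,\dots,Y_{m-1}$ that kill these successive obstructions, and $Y:=Y_1\otimes\cdots\otimes Y_{m-1}$ then works at every stage simultaneously; via Lemma~\ref{Steinberg}(a) one may collapse this into a single module of the form $\End_k(\St_n)$ for large $n$. This proves part~(a).

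For part~(b), the compatibility~(\ref{COM}) forces $\alpha(g)$ to preserve $Q'$ and restrict there to $\alpha'(g)$, so the cocycle $\gamma$ of $\alpha$ takes values in the closed subgroup $X:=\{1+f\in U_Q : f(Q')\subseteq Q'\}\subseteq U_Q:=1+J_V^Q$, which determines a Schreier extension $G^\diamond_X\to G$ (here $J_V^Q$ is nilpotent by Lemma~\ref{firstlemma}, since $\soc^N Q\subseteq V\subseteq Q'$). After tensoring with $Y$, one has $\widetilde{G^\diamond}_X\to G$ with unipotent kernel $\widetilde X\subseteq\widetilde U_Q$. Applying the inductive argument of the previous paragraph directly to $\widetilde{G^\diamond}_X\to G$ with the induced filtration, and enlarging $Y$ if necessary via a further iteration of Lemma~\ref{Steinberg}(b), produces a splitting $\tilde\sigma:G\to\widetilde{G^\diamond}_X$ restricting to $\iota$ on $N$. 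Writing $\tilde\sigma(g)=(\beta(g),g)$ with $\beta:G\to\widetilde X$, the morphism $\hat\alpha:=\beta\cdot(\alpha\otimes\rho_Y)$ is a group homomorphism $G\to GL_k(Q\otimes Y)$ that preserves $Q'\otimes Y$ and that serves as the required structure map exhibiting $(Q\otimes Y,Q'\otimes Y)$ as a strongly $G$-stable pair whose induced action on $Q'\otimes Y$ satisfies the conclusion of part~(a). The main technical obstacle throughout is verifying that each obstruction arising in the induction really factors as the image of a fixed $H^2(G/N,M)$-class under the tensoring map $v\mapsto v\otimes 1_Y$, so that Lemma~\ref{Steinberg}(b) applies directly; this requires careful bookkeeping of how perturbations of a partial splitting at a lower filtration level transport the non-abelian cocycle $\gamma$ up to the next level.
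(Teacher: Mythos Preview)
Your overall strategy is sound, and the inductive filtration argument can be made to work, but you are missing the key simplification that the paper's proof exploits and that the hypotheses are tailored to provide. You never use the assumption $\rad_N Q'\subseteq V$. Combined with $\soc^N Q'\subseteq V$, this forces $J_V^2=0$: any $\sigma\in J_V$ kills $V\supseteq\rad_N Q'$, hence $\sigma(Q')$ is semisimple and lies in $\soc^N Q'\subseteq V$, so a second element of $J_V$ kills it. Thus $U=1+J_V$ is already \emph{abelian}, the Schreier cocycle $\gamma'$ is an honest (normalized, inflated) abelian $2$-cocycle representing a single class in $H^2(G/N,J_V)$, and a single application of Lemma~\ref{Steinberg}(b) kills its image $[\gamma'\otimes 1_Y]$ in $H^2(G/N,J_V\otimes\End_k(Y))$. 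No filtration, no induction, no bookkeeping of how partial lifts transport obstructions is needed. Your central-series induction is correct in principle but proves a harder statement than required; the hypothesis $\rad_N Q'\subseteq V$ exists precisely to collapse it to one step.

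For part~(b) the contrast is sharper. The paper does \emph{not} attempt to split any extension on the $Q$-side; it stays on the abelian $Q'$-side. The compatibility $\alpha|_{Q'}=\alpha'$ shows that $\gamma'(g,h)$ actually lands in $1+I_V$, where $I_V\subseteq J_V$ is the image of the restriction map $J_V^Q\to\End_N(Q')$; one then kills the class in $H^2(G/N,I_V\otimes\End_k(Y))$ and chooses the trivializing cochain $\beta$ with values in $1+I_V\otimes\End_k(Y)$. Because the surjection $J_V^Q\otimes\End_k(Y)\twoheadrightarrow I_V\otimes\End_k(Y)$ is linearly split, $\beta$ lifts to a $k$-scheme morphism $\beta^{Q\otimes Y}:G/N\to 1+J_V^Q\otimes\End_k(Y)$, and $\widetilde\alpha^\#(g):=\beta^{Q\otimes Y}(\bar g)\,(\alpha(g)\otimes\rho_Y(g))$ is the desired structure map for the pair $(Q\otimes Y,Q'\otimes Y)$. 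Note that $\widetilde\alpha^\#$ is \emph{not} a group homomorphism---only its restriction to $Q'\otimes Y$ is---whereas your approach aims to make $Q\otimes Y$ itself a $G$-module by splitting the full $\widetilde X$-extension. That is a stronger conclusion than the lemma asserts (and would in fact short-circuit the induction in Theorem~\ref{realmaintheorem}); it can be carried out along your lines, but again requires the multi-step filtration argument that the paper's hypotheses were designed to avoid.
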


\begin{proof} Let $A$ be the enveloping algebra of $N$ in $\End_k(Q')$, i.~e., the image of the distribution
algebra of $N$ in $\End_k(Q')$. 

We will follow \S2.4.3 below. Let $J_V=J_V^{Q'}$ be the annihilator in $\End_{N}(Q')=\End_A(Q')$ of $V$.  Since $\soc^NQ'\subseteq V$, $J_V$ is
a nilpotent ideal in $\End_{N}(Q')$ by Lemma \ref{firstlemma}. If $\sigma\in J_V$, 
$$\rad_N(\sigma(Q'))=(\rad A)(\sigma (Q')) =\sigma(\rad_A Q')=\sigma(\rad_N Q')\subseteq\sigma(V)=0.$$
The last containment follows from the assumption that $\rad_NQ'\subseteq V$. Thus,
 $\sigma(Q')\subseteq\soc^NQ'$. Hence, for $\sigma,\tau\in J_V$, $\sigma\tau(Q')=0$, i.~e., $J_V^2=0$.  Thus, $U=U_V:=1_{Q'}+J_V$ is a commutative (closed) subgroup of
$GL_k(Q')$, isomorphic to the additive
vector group $J_V$. This subgroup commutes elementwise with $A$ (and with the action of $N$
at the level of $S$-points). In addition, $J_V$ has a structure of a rational $G$-module with action
given by $g\cdot \sigma:=\alpha'(g)\sigma\alpha'(g)^{-1}$, $g\in G$, $\sigma\in J_V$, where $\alpha':G\to
GL_k(Q)$ is a structure morphism for the pair $(Q',V)$. 

Tensoring $Q'$ with any nonzero finite dimensional rational $G/N$-module $Y$, we have $U_{V\otimes Y}:= 1_{Q'\otimes Y} + J_{V\otimes Y}
\cong 1_{Q'\otimes Y}+ J_V\otimes \End_k(Y)$. Here we consider the strongly $G$-stable pair $(Q'\otimes Y, V\otimes Y)$.
If $\alpha':G\to GL_k(Q')$ denotes a structure morphism for $(Q',V)$, 
then $\widetilde\alpha':=\alpha'\otimes \rho_{G,Y}$ gives a structure morphism for $(Q'\otimes Y,V\otimes Y)$; see the last paragraph of \S2.4.3 (wtih $\alpha',Q'$ here playing the roles of $\alpha,Q$ there). The factor set associated
to $\widetilde\alpha'$ is given by $\widetilde\gamma'(g,h)=\gamma'(g,h)\otimes 1_Y$. Here
$\gamma':G\times G\to U$, $(g,h)\mapsto\alpha'(g)\alpha'(h)\alpha'(gh)^{-1}$,  is the factor set associated to $\alpha'$.

Since $U=U_V$ and $U_{V\otimes Y}$ are
 both commutative, both $\gamma'$ and $\widetilde\gamma'$ are (normalized) 2-cocycles for $G$ acting on
  $U=1_{Q'}+J_V$ and $U_{V\otimes Y}=1_{Q'\otimes Y}+J_V\otimes\End_k(Y)$. 
Write $j'(g,h):= 1_{Q'}-\gamma'(g,h)$ and $\widetilde j'(g,h):=j'(g,h)\otimes 1_Y=1_{Q'\otimes Y}-\widetilde \gamma'(g,h)$. Then $j'$ and $\widetilde j'$ are rational $G$-module 2-coycles in $Z^2(G,J_V)$ and $Z^2(G,J_V\otimes\End_k(Y))$, respectively.
Since $(Q',V)$ is strongly $G$-stable, both $j'$ and $\widetilde j'$ are inflated from $2$-cocycles of $G/N$---denote them by the same notation
in $Z^2(G/N,J_V)$ and $Z^2(G/N,J_{V\otimes Y})$, respectively.
 In particular, $\widetilde j'$ defines an element $[\widetilde j']\in H^2(G/N,J_V\otimes \End_k(Y))$. Lemma \ref{Steinberg} says $Y$
 can be chosen so that $[\widetilde j']=0$. Translating back into multiplicative notation, there is a morphism $\beta:G/N\to U_{V\otimes Y}$ of $k$-schemes (varieties, here) such that
$$\widetilde\gamma'(g,h)={^g\beta}(\overline h)^{-1}\, {\beta}(\overline g)^{-1}\beta(\overline{gh}), \,\forall g,h\in G$$ 
where $g\mapsto \overline g$ is the quotient homomorphism $G\to G/N$. (See the discussion immediately
above the start of \S2.3.2.) Since $\widetilde\gamma'$ is
normalized, this equation forces $\beta(\bar 1)=1_{Q'\otimes Y}$.  Let $G^\diamond=G^\diamond(Q'\otimes Y,V\otimes Y,\widetilde\alpha')$ be the group
constructed at the end of  \S2.4.3 (wth the role of $Q\otimes Y$ there played by $Q'\otimes Y$ here,
and with $\widetilde\alpha'$ here used in a similar substitution). Recall $\widetilde\alpha'=\alpha'\otimes\rho_{G,Y}$.  The map $G\to G^\diamond$ given by $g\mapsto (\beta(\overline g),g)$
is a homomorphism of algebraic groups. Composition of this homomorphism with the homomorphism $G^\diamond\to GL_k(Q')$
gives an algebraic group homomorphism $\widetilde\alpha^{\prime\#}:G\to GL_k(Q'\otimes Y)$ with
$$\widetilde \alpha^{\prime\#}(g)=\beta(\bar g)\widetilde\alpha^{\prime}(g), \quad g\in G.$$
This makes $Q'\otimes Y$ a rational $G$-module, and property (1) is satisfied since 
$$\beta(\bar n)=\beta(1)=1_{Q'\otimes Y},\quad \forall n\in N.$$
(This calculation should be done at the level of $S$-points.)  
 
 Also, property (2) is satisfied, since $\beta(g)\in U_{V\otimes Y}$ for $g\in G$, and $U_{V\otimes}$ acts trivially on $V\otimes Y$ by
 construction. This completes the proof of (a).
   
   We now prove (b), keeping the notation of the proof of (a). Let $J^Q_V$ be the annihilator of $V$ in
   $\End_N(Q)$, and let $A^Q$ be the enveloping algebra of $N$ in $\End_k(Q)$. Observe that $Q'/V$ is a completely reducible $N$-module, by hypothesis. Thus, 
   $$(\rad A^Q)(Q'/V)\subseteq(\rad A^Q)\soc^N(Q/V)=\rad_N\soc^N(Q/V)=0.$$
   That is, $(\rad A^Q)Q'\subseteq V$. So $(\rad A^Q)J^Q_VQ'= J^Q_V(\rad A^Q)Q'\subseteq J^Q_VV=0,$
   which gives $J^Q_VQ'\subseteq\soc^NQ\subseteq V\subseteq Q'$. Consequently, restriction
   to $Q'$ defines a $k$-linear and multiplicative map $J^Q_V\to\End_N(Q')$. Its image, which we denote
   as $I_V$, is contained in $J_V$, since $J^Q_VV=0$. The $G$-module structure on $J_V$ is given by
   conjugation by $\alpha'(g)$, $g\in G$. Since $J^Q_V$ is stable under conjugation by $\alpha(g)$, and
   $\alpha(g)|_{Q'}=\alpha'(g)$, by hypothesis, the subspace $I_V$ is a $G$-submodule of $J_V$. (Indeed, the surjection
   $J^Q_V\twoheadrightarrow I_V$ is $\alpha(g)$-equivariant, for each $g\in G$.)
   
The $2$-cocycle $\gamma':G\times G\to U_V=1_{Q'}+J_V$ has the form $\gamma'(g,h)=\alpha'(g)\alpha'(h)\alpha'(gh)^{-1}$. Since $\alpha(g)|_{Q'}=\alpha'(g)$, $\gamma'$  takes values in $1_{Q'}+I_V\subseteq 1_{Q'}+J_V$. Hence, the additive 2-cocycle $j'$ in the proof of part (a) is the image under inclusion
of a $2$-cocycle $j^{\prime\prime}:G\times G\to I_V$. Similarly, $\widetilde j'=j'\otimes1_Y$ is the image of
$\widetilde j^{\prime\prime}=j^{\prime\prime}\otimes 1_Y$. Now in the proof of part (a), choose the
  rational $G/N$-module $Y$ so that the cohomology class $[\widetilde j^{\prime\prime}]\in H^2(G/N,
I_V\otimes\End_k(Y))$ is zero, using Lemma \ref{Steinberg}.  (This vanishing implies that
$[\widetilde j']$ is zero.) Thus, we can take the map $\beta:G/N\to
U_{V\otimes Y}=1_{Q'\otimes Y}+J_{V\otimes Y}=1_{Q'\otimes Y}+J_V\otimes\End_k(Y)$, in the proof of (a), so that it takes values in $1_{Q'\otimes Y}+ I_{V}\otimes\End_k(Y)$.  Put $I_V\otimes \End_k(Y)=I_{V\otimes Y}$. Let $J^{Q\otimes Y}_{V\otimes Y}$ be
the annihilator of $V\otimes Y$ in $\End_N(Q\otimes Y)$; thus, $J^{Q\otimes Y}_{V\otimes Y}=J_V^Q\otimes
\End_k(Y)$. The $k$-group scheme map $1_{Q\otimes Y}+J^{Q\otimes Y}_{V\otimes Y}\to 1_{Q'\otimes Y}+ I_{V\otimes Y}$ identifies at the $k$-scheme level with the surjective linear map of vector spaces
 $J^{Q}_{V}\otimes\End_k(Y)\twoheadrightarrow I_V\otimes\End_k(Y)$,
which is split in the category of vector spaces. Hence, $\beta$ lifts to a morphism $\beta^{Q\otimes Y}:G/N\to 1_{Q\otimes Y}+J^{Q\otimes Y}_{V\otimes Y}$. 

Now, define 
$$\widetilde\alpha^\#:G\to GL_k(Q\otimes Y),\quad g
\mapsto \beta^{Q\otimes Y}(\bar g)\widetilde\alpha(g), \,g\in G,$$
where $\widetilde\alpha(g)= \alpha(g)\otimes \rho_{G,Y}(g)$.   Then $\widetilde\alpha^\#$ is a morphism of
$k$-schemes (varieties, in this case), which serves (like $\widetilde\alpha$) as a structure morphism
for the strongly $G$-stable structure on the rational $N$-module $Q\otimes Y$. (Note that
$\beta^{Q\otimes Y}(\bar g)\in 1_{Q\otimes Y}+J^{Q\otimes Y}_{V\otimes Y}\subseteq \Aut_N(Q\otimes Y)$, and that the map $g\mapsto \beta^{Q\otimes Y}(\bar g)$ factors
through $G\to G/N$.) 

By construction, $\beta^{Q\otimes Y}(\bar g)|_{Q'\otimes Y}=\beta(\bar g)$, $g\in G$, so
$$\widetilde\alpha^\#(g)|_{Q'\otimes Y}=\beta(\bar g)\widetilde\alpha'(g)=\widetilde\alpha^{\prime\#}(g).$$
The right-hand term is $\rho_{G,Q'\otimes Y}(g)$, for the algebraic group homomorphism
$G\to GL_k(Q'\otimes Y)$ giving the rational $G$-module structure on $Q'\otimes Y$. See the proof of
part (a), modified as discussed above, using $I_V$. It now follows that $\widetilde\alpha^\#:G\to GL_k(Q\otimes Y)$ gives
$(Q\otimes Y,Q'\otimes Y)$ the structure of a strongly $G$-stable pair. This proves (b), and 
completes the proof of the lemma.
\end{proof}

The following result is the main result of this paper. As we will see, Theorem \ref{MainTheorem} is a corollary.

\begin{thm}\label{realmaintheorem} Assume $G/N$ is a reductive group.  Let $V$ be a $G$-submodule of
a finite dimensional rational $N$-module $Q$ containing $\soc^NQ$. The following statements about the pair $(Q,V)$
are equivalent:

(1) $(Q,V)$ is strongly $G$-stable;

(2) $(Q,V)$ is tensor $G$-stable;

(3) $(Q,V)$ is numerically $G$-stable.
\end{thm}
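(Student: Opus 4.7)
The equivalence (2)$\Leftrightarrow$(3) for pairs is already recorded in Lemma \ref{help}, and (3)$\Rightarrow$(1) is immediate from Lemma \ref{biglemma}: any rational $G$-module $M$ witnessing numerical $G$-stability of $(Q,V)$ satisfies the hypotheses of that lemma. So the only non-trivial implication is (1)$\Rightarrow$(2), which I will establish by reducing to Lemma \ref{specialcase}. Observe that Lemma \ref{specialcase}(a) already supplies (1)$\Rightarrow$(2) in the special case where $Q/V$ is $N$-semisimple (equivalently, $\rad_N Q\subseteq V$); my plan is to handle the general case---only $\soc^N Q\subseteq V$ being assumed---by iterating the more flexible Lemma \ref{specialcase}(b) along a canonical filtration.

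The filtration I will use is the $V$-relative $N$-socle filtration of $Q$. Set $V_0:=V$ and inductively let $V_{i+1}$ be the preimage in $Q$ of $\soc^N(Q/V_i)$, producing a finite chain $V=V_0\subseteq V_1\subseteq\cdots\subseteq V_m=Q$ with each $V_{i+1}/V_i$ semisimple as $N$-module. Given a structure morphism $\alpha:G\to GL_k(Q)$ for $(Q,V)$, each $\alpha(g)$ is an $N$-isomorphism $Q\overset\sim\to Q^g$ and so induces $N$-isomorphisms $Q/V_i\overset\sim\to (Q/V_i)^g$, which preserve $N$-socles as subspaces of the underlying vector space. Consequently each $V_i$ is $\alpha$-stable, and restricting $\alpha$ exhibits each $(V_i,V)$ as a strongly $G$-stable pair with $\soc^N V_i=\soc^N Q\subseteq V$ and $\rad_N V_i\subseteq V_{i-1}$.

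I will then induct on $i$ on the claim: there exist finite dimensional rational $G/N$-modules $Y_1,\ldots,Y_i$ such that, writing $Y^{(i)}:=Y_1\otimes\cdots\otimes Y_i$, the pair $(Q\otimes Y^{(i)},\,V_i\otimes Y^{(i)})$ is strongly $G$-stable, the second entry $V_i\otimes Y^{(i)}$ carries a genuine rational $G$-module structure, and $V\otimes Y^{(i)}$ embeds in it as a $G$-submodule. The base case $i=0$ is the hypothesis. For the step $i\to i+1$, I apply Lemma \ref{specialcase}(b) with ``$Q$''$=Q\otimes Y^{(i)}$, ``$V$''$=V_i\otimes Y^{(i)}$, and ``$Q'$''$=V_{i+1}\otimes Y^{(i)}$. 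Because $Y^{(i)}$ is $N$-trivial, the needed containments $\soc^N(V_{i+1}\otimes Y^{(i)})\subseteq V_i\otimes Y^{(i)}$, $\rad_N(V_{i+1}\otimes Y^{(i)})\subseteq V_i\otimes Y^{(i)}$, and $\soc^N(Q\otimes Y^{(i)})\subseteq V_i\otimes Y^{(i)}$ all follow from the corresponding containments before tensoring, while the compatibility of structure maps is inherited by restriction from the inductively given one on $Q\otimes Y^{(i)}$. The conclusion of Lemma \ref{specialcase}(b) produces $Y_{i+1}$, and the $G$-submodule inclusion $V\otimes Y^{(i+1)}\subseteq V_{i+1}\otimes Y^{(i+1)}$ follows from $V\otimes Y^{(i)}\subseteq V_i\otimes Y^{(i)}$ by tensoring with the rational $G$-module $Y_{i+1}$. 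After $m$ iterations $V_m=Q$, so $Y:=Y^{(m)}$ witnesses the tensor $G$-stability of $(Q,V)$, establishing (2).

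The main obstacle is the bookkeeping in the induction---verifying that each round of tensoring preserves strong $G$-stability together with the relevant socle and radical containments, and that the original $V$ (not merely its enlargements $V_i$) is carried along as a $G$-submodule throughout. The latter is automatic from the elementary fact that tensoring a rational $G$-submodule inclusion with a rational $G$-module again yields a $G$-submodule inclusion; the former is ensured by the $N$-triviality of each $Y_j$ and by restriction of the inductively constructed structure morphism.
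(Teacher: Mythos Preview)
Your proof is correct and follows essentially the same strategy as the paper's: reduce (1)$\Rightarrow$(2) to iterated applications of Lemma \ref{specialcase}(b), with (2)$\Leftrightarrow$(3) via Lemma \ref{help} and (3)$\Rightarrow$(1) via Lemma \ref{biglemma}. The only cosmetic differences are that the paper phrases the argument as an induction on the $N$-socle length of $Q/V$ (applying Lemma \ref{specialcase}(b) once and then invoking the inductive hypothesis on the resulting pair $(Q\otimes Y,\,Q'\otimes Y)$), whereas you unroll this into an explicit forward induction along the relative socle filtration $V=V_0\subseteq V_1\subseteq\cdots\subseteq V_m=Q$; and the paper takes the intermediate module to be $Q'=V+\soc^N_{-i-1}Q$ rather than your $V_1=$ preimage of $\soc^N(Q/V)$, which can be strictly larger but serves the same purpose.
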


\begin{proof} Lemma \ref{biglemma} implies, as a special case, that if $(Q,V)$ is numerically $G$-stable, then it is strongly $G$-stable. Thus,
(3) $\implies$ (1). On the other hand, (2) $\implies$ (3) follows from Lemma \ref{help}.

So it is enough to show that (1) $\implies$ (2).
Assume that $(Q,V)$ is strongly $G$-stable with structure map $\alpha:G\to GL_k(Q)$. The theorem's
hypothesis also gives that
that $\soc^NQ=\soc^N_{-1}Q\subseteq V$.  Let $n$ be the smallest positive integer such that $Q=\soc^N_{-n}Q$.
Let $i\geq 1$ be the largest integer $\leq n$ such that $V\supseteq \soc^N_{-i}Q$. If $i=n$, then $V=Q$, and there
is nothing to prove. Otherwise, put $Q'=V+\soc^N_{-i-1}Q$.   For $g\in G$, the isomorphism
$\alpha(g):\rho_{N,Q}\to\rho_{N,Q}^g$ also gives an isomorphism of modules for the distribution
algebra of $N$. It follows that  $\alpha(g)A\alpha(g)^{-1}=A$, for all $g\in G$, if $A$ is the enveloping algebra 
of $N$ for its action on $Q$. Thus, for $g\in G$,  $\alpha(g)(\soc^N_{-j}Q)=\soc^N_{-j}Q$ for all $j\geq 1$. 
(Notice $\soc^NQ$ is the subspace of $Q$ killed by $\rad A$, etc.) For
$v\in V$, $g\in G$, $\alpha(g)v=g.v$, so $\alpha(g)|_{Q'}$ has image in $GL_k(Q')$ and so defines 
a structure map $\alpha':G\to GL_k(Q')$, making $(Q',V)$ into a strongly $G$-stable pair. By construction,
the compatibility condition (\ref{COM}) in Lemma \ref{specialcase} holds. Also, $\rad_NQ'\subseteq V$,
since $Q'/V$ is a completely reducible $N$-module. 

With this preparation, we will now prove the implication (1) $\implies$ (2) by downward induction on
the socle length of $Q/V$.

By Lemma \ref{specialcase}(b) (whose conclusions include those of part (a)), there is a rational $G/N$-module
$Y$ and a rational $G$-module structure on $Q'\otimes Y$, satisfying

\begin{itemize}
\item[(1)] $(Q\otimes Y)|_N\cong Q\otimes Y|_N$, with $Y|_N$ is a trivial $N$-module;
\item[(2)] The subspace $V\otimes Y$ of $Q'\otimes Y$ is a rational $G$-module isomorphic to the tensor product of the $G$-modules $V$ and $Y$, regarding $Y$ as a $G$-module through $G\to G/N$.
\item[(3)] $(Q\otimes Y,Q'\otimes Y)$ has the structure of a strongly $G$-stable pair.
\end{itemize}
Observe the $N$-socle length of $(Q\otimes Y)/(Q'\otimes Y)$ is the same as for $Q/Q'$, which
is less than that of $Q/V$. By induction there exists a rational, finite dimensional $G/N$-module $Y'$
such that $Q\otimes Y\otimes Y'$ is a rational $G$-module containing $V':=Q'\otimes Y\otimes Y'$ as a $G$-submodule. Finally,
$V'\otimes Y'$ contains $V\otimes Y\otimes Y'$ as a $G$-submodule.  The tensor $G$-stability of $(Q,V)$
follows, and the theorem is proved. \end{proof}

\medskip
\noindent
{\underline{Proof of Theorem \ref{MainTheorem}:} Under the hypothesis of the theorem, Lemma \ref{biglemma} implies that
the pair $(Q,\soc^{G_r}Q)$ is strongly $G$-stable. Then Theorem \ref{realmaintheorem} implies that $(Q,\soc^{G_r}Q)$
is numerically $G$-stable, as required. \qed

\medskip
The following result\footnote{Very recently, the authors have realized that Theorem \ref{finalthm} can be be
significantly strengthened. Not only are $\gr^NQ$ (see the statement of the theorem) and $\gr_NQ$ (see Remark \ref{finalthmremark}(a)) rational $G$-modules, but both are  graded modules for a positively
graded algebra $\gr_NB$, for a suitably large finite dimensional homomorphic
image $B$ of the distribution algebra  $E={\text{\rm Dist}}(G)$ of $G$. As part of the ``sufficiently large"
condition, it is required that  $Q$ be an $\fa$-module, for the homomorphic image $\fa$ of $\text{\rm Dist}(N)$
in $B$, and that the map $\text{\rm Dist}(N)\to GL_k(Q)$ factor through $\text{\rm Dist}(N)\twoheadrightarrow\fa$. 
 This implies that $\gr\fa=\gr_N\fa$ acts naturally on $\gr^NQ$ or $\gr_NQ$. The acton of $\gr_NB$ 
 is then constructed to induce these actions of $\gr\fa$.  (Note this implies $\gr_NQ$ is generated by its term
in grade 0 as a $\gr_N\fa$ or $\gr_NB$-module, so the action of $\gr_NB$ is quite nontrivial.) The action of $E$ on
$\gr^NQ$, provided by the $G$-action of Theorem \ref{finalthm}, is recovered from the
action of $B/\rad_NB=(\gr_NB)_0$ on $\gr^N Q$. A similar statement holds for $\gr_NQ$.  

If $N$ is a finite group scheme, as in the situation of
Theorem 1.1, then the algebra
$$\gr_N{\text{\rm Dist}}(G)=\gr_NE:=\oplus\rad^i{\text{\rm Dist}}(N)E/\rad^{i+1}{\text{\rm Dist}}(N)E$$
makes sense, and it acts on $\gr^NQ$ (or $\gr_NQ$) compatibly with the natural action of
$\gr_N{\text{\rm Dist}}(N)=\gr{\text{\rm Dist}}(N)$, induced by the original action of $N$ on $Q$.   This result establishes a new graded analogue of the Humphreys-Verma 
conjecture mentioned at the beginning of this paper for $G_r$-PIMS.  Details of
the proof will appear elsewhere.   }
} is an easy consequence of our approach.    Although
we state the socle series version of this result, there is a related radical series result; see Remark \ref{finalthmremark}(a).
There is no need  for any reductive
       requirement  on $G/N$  in the rest of this  section.

\begin{thm}\label{finalthm}  Let $(Q,V)$ be a strongly $G$-stable pair such that $\soc^NQ\subseteq V$.
Then $\gr^NQ:=\bigoplus_{n\geq 0}\soc^N_{-n-1}Q/\soc^N_{-n}Q$ has a ``natural" rational G-module structure
compatible with the given action of $N$.
\end{thm}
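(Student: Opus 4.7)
The plan is to show that the structure morphism $\alpha\colon G\to GL_k(Q)$ witnessing the strong $G$-stability of the pair $(Q,V)$ automatically descends to a genuine algebraic group homomorphism on the associated graded module $\gr^N Q$, because the obstruction (the factor set) acts trivially on graded pieces.

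First, I would verify that $\alpha(g)$ preserves the socle filtration of $Q$ for every $g$. By (\ref{Spoint})(1), $\alpha(g)\colon Q\to Q^g$ is an isomorphism of $N$-modules at the level of $S$-points. Since $Q$ and $Q^g$ share the same underlying vector space, and since the enveloping algebras of $N$ acting through $\rho$ and through $\rho^g={}^g\rho$ coincide as subalgebras of $\End_k(Q_S)$ (conjugation by $g$ on $N$ induces an automorphism of that enveloping algebra with the same image), the subspaces $\soc^N_{-n}Q$ and $\soc^N_{-n}(Q^g)$ agree as $k$-subspaces of $Q$. Hence $\alpha(g)\soc^N_{-n}Q=\soc^N_{-n}Q$ for every $n\geq 0$. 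Functoriality of this argument means $\alpha$ induces a $k$-scheme morphism
\[
\bar\alpha\colon G\longrightarrow GL_k(\gr^N Q)
\]
that respects the grading.

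Next, I would show that $\bar\alpha$ is a group homomorphism. Let $\gamma(g,h)=\alpha(g)\alpha(h)\alpha(gh)^{-1}$ be the usual factor set; as recalled in \S2.4.3, $\gamma(g,h)$ lies in the unipotent subgroup $U_V=1_Q+J_V\subseteq \Aut_N(Q)$, where $J_V\subseteq\End_N(Q)$ is the annihilator of $V$. Since $\soc^NQ\subseteq V$ by hypothesis, $J_V$ is contained in the ideal $J\subseteq\End_N(Q)$ annihilating $\soc^NQ$. By Lemma \ref{firstlemma}, $J\soc^N_{-n}Q\subseteq\soc^N_{-n+1}Q$, so writing $\gamma(g,h)=1+j(g,h)$ with $j(g,h)\in J_V\subseteq J$, the element $j(g,h)$ shifts the socle filtration strictly downward and therefore acts as zero on each graded piece $\soc^N_{-n-1}Q/\soc^N_{-n}Q$. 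Thus $\gamma(g,h)$ acts as the identity on $\gr^NQ$, which translates into the identity $\bar\alpha(g)\bar\alpha(h)=\bar\alpha(gh)$ in $GL_k(\gr^NQ)$. Checked functorially at $S$-points, this gives that $\bar\alpha$ is a morphism of $k$-group schemes.

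Finally, compatibility with the $N$-action comes from condition (\ref{Spoint})(2): for $n\in N(S)$, $\alpha_S(n)=\alpha_S(1\cdot n)=\alpha_S(1)\rho_S(n)=\rho_S(n)$, so $\alpha|_N=\rho$. Passing to the associated graded then gives $\bar\alpha|_N=\bar\rho$, the natural induced $N$-action on $\gr^NQ$. The rational $G$-module structure on $\gr^NQ$ afforded by $\bar\alpha$ therefore extends the given $N$-action. The main (minor) obstacle is simply the bookkeeping to check that the constructions (passage to socle layers, quotients, and to the factor set) can all be phrased as morphisms of $k$-schemes or $k$-functors, but this is routine given that $\alpha$ itself is such a morphism and the socle filtration is defined by the kernels of polynomial conditions in the distribution algebra acting on $Q$. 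A brief remark should also clarify the ``naturality'': although $\alpha$ is not unique, any two choices of structure map differ by an element of $U_V$, which acts trivially on $\gr^NQ$, so $\bar\alpha$ is canonical.
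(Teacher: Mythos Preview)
Your argument is correct and is essentially the same as the paper's: the paper packages the step ``$\gamma(g,h)$ acts trivially on each socle layer'' by passing through the extension group $G^\diamond=G^\diamond(\alpha,U)$ with $U=1_Q+J_V$, observing that $Q$ and hence $\gr^NQ$ is a $G^\diamond$-module, and then quoting Lemma~\ref{firstlemma} to see that $U$ acts trivially on $\gr^NQ$, so the action descends to $G\cong G^\diamond/U$. You have simply unpacked that argument directly at the level of $\bar\alpha$, and your naturality remark coincides with the paper's (which defers it to Theorem~\ref{endofsec3}).
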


\begin{proof} In fact, let $G^\diamond=G^\diamond(\alpha,U)$ be the extension of $G$ by $U:=1_Q+J_{V}$. Then $Q$ is a rational $G^\diamond$-module. Since
$N$ identifies as a normal subgroup of $G^\diamond$, $\gr^NQ$ is a rational $G^\diamond$-module. But $U$ acts trivially
 on $\text{gr}^NQ$ by Lemma \ref{firstlemma}, so it is a rational $G\cong G^\diamond/U$-module with a compatible $N$-action.
\end{proof}

\begin{rem}\label{finalthmremark}(a) There is a dual version of the above theorem. Let $G,N,Q$ be as in Theorem \ref{finalthm}, but suppose that $Q$ has an $N$-homomorphic
image $W$ which has the same head as $Q$ and which has a compatible rational $G$-module structure. Now take linear duals. If the
equivalent conditions in Theorem \ref{realmaintheorem} hold for the pair $(Q^*,W^*)$, we conclude that
$\gr^NQ^*$ has a natural rational $G$-module structure compatible with the of $N$. Thus, $\gr_NQ:=\bigoplus_{n\geq 0}\rad^n_NQ/\rad^{n+1}_NQ\cong(\gr^NQ^*)^*$ has a natural $G$-module structure compatible with the given action of $N$.

There is another variation, if we  suppose the hypotheses of Theorem \ref{finalthm} and also that $Q$ has an irreducible
head as an $N$-module. Of course, $\gr_NQ$ still has a rational $G^\diamond$-module structure. But now $J_V\rad^n_NQ\subseteq \rad^{n+1}_NQ$ by Corollary \ref{firstcor}, so that $U$ acts trivially on $\gr_NQ$, and $\gr_NQ$ is a rational $G$-module with a compatible $N$-module structure.
These comments apply, in particular, to the PIMs for $G_r$ (when $G$ is split reductive in positive characteristic).

(b) The word ``natural" is used above in describing the action of $G$ on $\gr^NQ$ or on $\gr_NQ$
because it arises through the action of $G^\diamond$ on $Q$. As we will see in the next theorem, the latter action does not depend on the choice of
$\alpha:G\to GL_k(Q)$ affording strong stability. If $Q$ is a $G$-module, $\alpha$ can be chosen to be
the homomorphism giving the $G$-action. Then the action of $G$ in Theorem \ref{finalthm} agrees
with the action on $\gr^NQ$ induced by the given $G^\diamond$-action. In this way, also, the action of $G$ in the theorem
is natural. More generally, suppose $H$ is a (closed) subgroup of $G$ containing $N$ such that $Q$ is a rational
$H$-module extending the  $N$-module structure of $Q$. In some cases, we can assume that $\alpha|_H$
gives this $H$-module structure. Again, we find the induced action of $H$ agrees with the action obtained in
the above theorem. An important special case occurs when $H=NT$, with $T$ a maximal torus of $G$ and $Q$ an $NT$-module
satisfying the hypotheses of Lemma \ref{biglemma} with $Q$ an $NT$-submodule of $M$. Then $R$ can be arranged to be
an $H$-submodule and $\alpha|_H$ is a group homomorphism. In this case, the action of $H$---and, in particular, $T$---induced
on $\gr^NQ$ agrees with that of Theorem \ref{finalthm}. In this way, we obtain that $Q|_T\cong(\gr^NQ)|_T$, so that
$Q$ has``formal character" equal to that of a rational $G$-module. For example, let $N=G_r$ and $Q$ is $G_rT$-projective cover of an irreducible $G$-module,  this discussion recovers the main result of Donkin \cite{Donkin1}, that the character of $Q$ is that of a rational $G$-module. We have largely repeated his argument. Aside from our ``naturality" discussion (which is avoidable for the character result, by choosing a $T$-stable decomposition in Lemma
\ref{biglemma}), the only
new ingredient\footnote{See, however, footnote 11.} is Lemma \ref{firstlemma}, which shows that the underlying flag stabilized by $U$ can be taken to be the socle (or radical) series of $Q$. \end{rem}

For the two results below, let $(Q,V)$ be a fixed strongly $G$-stable pair afforded by a morphism 
$\alpha:G\to GL_k(Q)$, and with $\soc^NQ\subseteq V$. Let $G^\diamond=G^\diamond(Q,V,\alpha)$ be the extension of $G$ by $U:=1_Q+J_V$ as defined in \S2.4.3. There is a homomorphism
$\rho^\diamond:G^\diamond\to GL_k(Q)$, defined by making $(x,g)\in G^\diamond$ act on $q\in Q$
by $(u,g)q=u\alpha(g)q$, $u\in U, g\in G, q\in Q$.

\begin{thm}\label{endofsec3}  The image $G^*=U\alpha(G)$ of the homomorphism
$\rho^\diamond:G^\diamond\to GL_k(Q)$ is
independent of the choice of the morphism $\alpha$.  Also,  the induced homomorphism $G\to G^*/U$ of
algebraic groups sending
$g\in G$ to the
image $\alpha(g)$ in $G^*/U$ is independent of $\alpha$.

Finally, the algebraic group $G^\diamond$ is itself independent of the
choice of $\alpha$,
up to an isomorphism preserving the action $G^\diamond\times Q\to Q$ of $G^\diamond$ on $Q$. In fact,
there is a pull-back diagram
\begin{equation}\label{pullback}\begin{CD} G^\diamond @>>> G \\
         @VVV   @VVV \\
         G^* @>>> G^*/U
         \end{CD}
         \end{equation}
in the category of affine algebraic groups.  \end{thm}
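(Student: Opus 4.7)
The plan is to reduce all three assertions to one observation: any two structure maps $\alpha_1,\alpha_2:G\to GL_k(Q)$ affording the strong $G$-stability of $(Q,V)$ differ pointwise by an element of $U=1_Q+J_V$. Working at the level of $S$-points for a commutative $k$-algebra $S$, I would fix $g\in G(S)$ and set $\beta(g):=\alpha_1(g)^{-1}\alpha_2(g)$. By (\ref{Spoint})(1), both $\alpha_i(g)$ are $N_S$-module isomorphisms $Q_S\overset{\sim}{\to}Q_S^g$, so $\beta(g)$ is an $N_S$-automorphism of $Q_S$; by (\ref{Spoint})(3), both act as $g$ on $V_S$, so $\beta(g)$ fixes $V_S$ pointwise. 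Hence $\beta(g)-1_Q$ annihilates $V_S$ and is $N_S$-linear, so $\beta(g)-1_Q\in J_V\otimes S$, i.e.\ $\beta(g)\in U(S)$. Functoriality in $S$ promotes $\beta$ to a morphism $\beta:G\to U$ of $k$-schemes, and the pointwise identity $\alpha_2=\alpha_1\cdot\beta$ holds.

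From this the first two assertions follow at once. Since each $\alpha_1(g)$ normalizes $U$ by the construction of $G^\diamond$ in \S2.4.3, the set $\alpha_1(G)U=U\alpha_1(G)$ is a subgroup, and
$$G^*=U\alpha_2(G)=U\alpha_1(G)\beta(G)=U\alpha_1(G)$$
does not depend on the choice of structure map. The subgroup $U$ is closed in $G^*$ (being closed in $GL_k(Q)$) and normal in $G^*$ (normalized by every $\alpha(g)$ and by itself, and these together generate $G^*$), so the quotient $G^*/U$ exists as an affine algebraic group. The composite $\bar\alpha:G\overset{\alpha}{\longrightarrow}G^*\twoheadrightarrow G^*/U$ is a morphism of algebraic groups, and in fact a homomorphism because $\alpha(g)\alpha(h)\alpha(gh)^{-1}=\gamma(g,h)\in U$; the identity $\alpha_1(g)U=\alpha_2(g)U$ established above shows $\bar\alpha$ is itself independent of $\alpha$.

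For the pull-back assertion, form $P:=G\times_{G^*/U}G^*$ and define $\phi:G^\diamond\to P$ on $S$-points by
$$\phi_S(u,g):=(g,\,u\alpha_S(g)),$$
which is well-defined because $u\alpha_S(g)U=\alpha_S(g)U=\bar\alpha_S(g)$. Using the product rule $(u,g)(v,h)=(u\cdot{}^gv\cdot\gamma(g,h),\,gh)$ together with the identity $\alpha(g)v\alpha(g)^{-1}\cdot\alpha(g)\alpha(h)\alpha(gh)^{-1}={}^gv\cdot\gamma(g,h)$, one checks that $\phi$ is a group homomorphism; its two-sided inverse on $S$-points is $(g,x)\mapsto(x\alpha_S(g)^{-1},g)$, whose first coordinate automatically lies in $U(S)$ by the definition of $P$. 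Thus $\phi$ is an isomorphism of affine algebraic groups, and by construction it intertwines $\rho^\diamond$ with the natural action of $G^*$ on $Q$ through the second coordinate of $P$. Since the two maps $\bar\alpha$ and $G^*\twoheadrightarrow G^*/U$ in the pull-back square are independent of $\alpha$, so is $P$, and hence $G^\diamond$ up to canonical isomorphism preserving the action on $Q$. The main obstacle is not the abstract-group algebra---which is routine once $\alpha_1^{-1}\alpha_2\in U$ is in hand---but the schematic bookkeeping: one must verify that $U$ is genuinely a closed normal $k$-subgroup of $G^*$ so that $G^*/U$ exists as an affine algebraic group, that every map in sight is a morphism of $k$-schemes, and that the bijection on $S$-points afforded by $\phi$ upgrades via Yoneda to an isomorphism of $k$-functors.
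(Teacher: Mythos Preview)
Your proof is correct and follows essentially the same approach as the paper. The paper's argument is terser but identical in substance: it shows $\alpha(g)^{-1}\alpha'(g)\in U$ from the two conditions (\ref{Spoint})(1) and (\ref{Spoint})(3), deduces independence of $G^*$ and of the map $G\to G^*/U$, and then exhibits the same explicit isomorphism $G^\diamond\to G^*\times_{G^*/U}G$, $(x,g)\mapsto(x\alpha(g),g)$ with inverse $(y,g)\mapsto(y\alpha(g)^{-1},g)$; your added remarks about $U$ being closed and normal in $G^*$ and about the schematic bookkeeping simply make explicit what the paper leaves to the reader.
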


\begin{proof}Suppose $\alpha':G\to GL_k(Q)$ also gives strong
$G$-stability of $(Q,V)$, so that
$$\alpha(g)\rho(n)\alpha(g)^{-1}=\rho({^gn})=\alpha'(g)\rho(n)\alpha'(g)^{-1},\,\,
\forall g\in G, n\in N.$$
(As usual, equations such as this involving
$k$-schemes can be interpreted
diagrammatically.) It follows that $\alpha(g)^{-1}\alpha'(g)\in\Aut_NQ$, for all $g\in G$. However,
the strong $G$-stability of the pair $(Q,V)$ requires that $\alpha(g)|_V$ and $\alpha'(g)|_V$ both give
the action of $g$ on $V$. Consequently, $\alpha(g)^{-1}\alpha'(g)\in 1_Q+ J_V=U$. This proves that
$G^*=U\alpha(G)$ is independent of the choice of $\alpha$, and the natural induced map $G\to G^*/U$ is also
independent of $\alpha$.

Finally, there is clearly a natural map
$$G^\diamond \to G^*\times_{G^*/U}G,\,\,(x,g)\mapsto (x\alpha(g),g),\,\, x\in U,g\in G.$$
An inverse to this map is given by $(y,g)\mapsto (y\alpha(g)^{-1},g)$, $g\in G, y\in U$. The
first map is both a group homomorphism and a morphism of $k$-schemes. (Note that the group and $k$-scheme structure
of the pull-back $G^*\times_{G/N}G$ is a closed subgroup of the product $G^*\times G$.) The given inverse is at
least a morphism of $k$-schemes. But it is also a group homomorphism, since
it is inverse to a group homomorphism.
\end{proof}

Given a strongly $G$-stable pair $(Q,V)$ with $V\supseteq\soc^NQ$, parts (2) and (3) of the following corollary present two equivalent (possibly non-abelian) cohomological obstructions to the problem
of extending the $G$-module structure on $V$ to all of $Q$. We continue the above notation, with
$G^\diamond=G^\diamond(Q,V,\alpha)$.

\begin{cor}\label{cortoendofsec3}The following statements are equivalent:

(1) The action of $N$ on $Q$ extends to a rational $G$-action,
agreeing with the action of $G$
on $V$.

(2) The exact sequence $1\to U\to G^\diamond\to G\to 1$ of algebraic groups
is split by a homomorphism $G\to G^\diamond$
extending the obvious map $\iota:N\to N^\diamond\subseteq G^\diamond$,
given at the level of $S$-points by
$\iota_S(n)=(1,n),$ $n\in N(S)$, $S$ a commutative $k$-algebra.

(3) The exact sequence
$1\to U\to G^\diamond/\iota(N)\to G/N\to 1$
of algebraic groups is split.
\end{cor}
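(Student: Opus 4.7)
The overall strategy is to translate rational $G$-module structures on $Q$ into group-theoretic splittings of $1\to U\to G^\diamond\to G\to 1$, passing between the two via the canonical action $\rho^\diamond:G^\diamond\to GL_k(Q)$. Implications (1)~$\Leftrightarrow$~(2) follow from direct constructions of one side from the other, and (2)~$\Leftrightarrow$~(3) from a pull-back description of $G^\diamond$ over $G^\diamond/\iota(N)$.

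For (1)~$\Rightarrow$~(2), given a rational $G$-module structure $\rho':G\to GL_k(Q)$ extending $\rho$ on $N$ and agreeing with the $G$-action on $V$, I would set $\beta(g):=\rho'(g)\alpha(g)^{-1}$. The intertwining identities
\[
\alpha(g)\rho(n)\alpha(g)^{-1}\;=\;\rho({^gn})\;=\;\rho'(g)\rho(n)\rho'(g)^{-1}
\]
force $\beta(g)\in\Aut_N(Q)$, while $\alpha(g)v=gv=\rho'(g)v$ on $V$ forces $\beta(g)|_V=1_V$, so $\beta$ factors as a morphism $G\to U=1_Q+J_V$ of $k$-schemes. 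A direct computation using the Schreier multiplication in \eqref{groupoperations} and $\gamma(g,h)=\alpha(g)\alpha(h)\alpha(gh)^{-1}$ shows that $\sigma(g):=(\beta(g),g)$ is an algebraic group homomorphism $G\to G^\diamond$ splitting $\pi$; the equality $\sigma(n)=(1,n)=\iota(n)$ is immediate since $\alpha|_N=\rho=\rho'|_N$. For (2)~$\Rightarrow$~(1), given a splitting $\sigma:G\to G^\diamond$ extending $\iota$, set $\rho':=\rho^\diamond\circ\sigma:G\to GL_k(Q)$, which is automatically an algebraic group homomorphism. On $N$, $\rho'(n)=\rho^\diamond(1,n)=\rho(n)$; writing $\sigma(g)=(\beta(g),g)$ with $\beta(g)\in U$, for $v\in V$ one has $\rho'(g)v=\beta(g)\alpha(g)v=\beta(g)(gv)=gv$, since $U$ acts trivially on $V$.

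For (2)~$\Leftrightarrow$~(3), the plan is to verify that the commutative square
\[
\begin{CD}
G^\diamond @>{\pi}>> G \\
@VVV @VVV \\
G^\diamond/\iota(N) @>>> G/N
\end{CD}
\]
is a pull-back of affine algebraic groups. Because the Schreier data defining $G^\diamond$ is inflated from $(G/N,U)$, at the level of $S$-points the right translation action of $\iota(N)$ on $G^\diamond$ is $(u,g)\cdot(1,n)=(u,gn)$, so $G^\diamond/\iota(N)$ identifies as a $k$-functor with $U\times(G/N)$ under $(u,g)\mapsto(u,\bar g)$, and the pull-back property is then an immediate $k$-functor check. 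Granted the pull-back, a splitting $\bar\sigma:G/N\to G^\diamond/\iota(N)$ of the bottom row lifts to a morphism $\sigma:G\to G^\diamond$ with $\pi\sigma=1_G$; for $n\in N$ we have $\pi(\sigma(n))=n$ and the image of $\sigma(n)$ in $G^\diamond/\iota(N)$ is $\bar\sigma(\bar n)=1$, so $\sigma(n)\in\iota(N)$, forcing $\sigma(n)=\iota(n)$. Conversely, any splitting of the top row extending $\iota$ descends modulo $\iota(N)$ to a splitting of the bottom row.

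The main technical point to justify is the scheme-theoretic pull-back property: one needs $\iota(N)$ to be closed and normal in $G^\diamond$ (established in \S2.3.2, carried over to $k$-group schemes in \S2.4.2), the quotient $G^\diamond/\iota(N)$ to exist as an affine algebraic group, and the pull-back isomorphism---bijective on $S$-points for every commutative $k$-algebra $S$---to be upgraded to an isomorphism of $k$-schemes via the functorial viewpoint of \cite{DG}. With these in place, the remainder of the argument is a routine diagram chase.
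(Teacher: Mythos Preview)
Your argument is correct. The chief difference from the paper is in the implication (1)~$\Rightarrow$~(2): the paper invokes Theorem~\ref{endofsec3}, replacing $\alpha$ by $\rho_G$ and using the pull-back description $G^\diamond\cong G^*\times_{G^*/U}G$ to obtain the splitting, whereas you construct the splitting directly via $\beta(g)=\rho'(g)\alpha(g)^{-1}$ and verify the homomorphism property by hand. Your route is more self-contained and avoids appealing to the structural independence result; the paper's route illustrates how that structural result can be put to work. For (2)~$\Leftrightarrow$~(3) the two arguments are essentially the same: the paper writes the lift $\widehat\sigma(g)=(\beta(\bar g),g)$ explicitly, while you package the same computation as a pull-back square $G^\diamond\cong (G^\diamond/\iota(N))\times_{G/N}G$, which is indeed valid since the Schreier data are inflated from $G/N$.
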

\begin{proof}It is clear that (2) $\implies$ (1). 

Now assume that (1) holds, and we prove (2).  Let $\rho_G:G\to GL_k(Q)$ be an algebraic group homomorphism
defining the action of $G$ on $Q$ which extends $\rho_N:N\to GL_k(Q)$ and gives the 
original action of $G$ on $V$.  By Theorem \ref{endofsec3}, we can replace
$\alpha$ by $\rho_G$, in defining $G^*$ and the homomorphism $G\to G^*/U$. Using the pull-back diagram (\ref{pullback}), we obtain, from the identity
map $\id_G:G\to G$ and the homomorphism $G\to G^*$, $g\mapsto \rho_G(g)$, a homomorphism $G\to G^\diamond$ which splits the surjection $G^\diamond\to G$.   Restricting to $N\subseteq G$, this
homomorphism has the property that $N\to G^\diamond\to G^*$ is given by $\rho_G|_N=\rho_N$, and $N\to G^\diamond\to G$ is the inclusion.
Since $\iota:N\to G^\diamond$ has the same properties, the constructed map $G\to G^\diamond$ must give $\iota$ on restriction
to $N$. Thus, (2) holds.

Clearly, (2) $\implies$ (3). Suppose that (3) holds, so that there is an algebraic group homomorphism $\sigma:G/N\to G^\diamond/\iota(N)$ with the composition $G/N\to G^\diamond/\iota(N)\twoheadrightarrow G/N$ the identity map.  If $g\in G$, let $\bar g$ denote its image in $G/N$. Then $\sigma(\bar g)=
(\beta(\bar g),\bar g)$ for some morphsim $\beta:G/N\to U$. A straightforward calculation, using the
homomorphism $\sigma$, shows
that 
$$\widehat\sigma:G\to G^\diamond,\quad g\mapsto (\beta(\bar g),g)$$
splits $\pi:G^\diamond\twoheadrightarrow G$ in the category of affine algebraic groups over $k$. The corollary is completely proved.\end{proof}

\begin{rem}\label{uniqueness} Although issues of uniqueness of the $G$-structure on $Q$ (or, on $Q^{\oplus n}$) in cases where there is such a structure are somewhat distant from the focus of this paper, our set-up can
be used to approach this question. If two $G$-structures are given on $Q$, both compatible with its $N$-module
structure, we obtain, under the hypotheses of Theorem \ref{endofsec3}, two complements to $U$ in $G^\diamond$. When
they are conjugate in $G^\diamond$, the two $G$-structures are equivalent. Even when this does not occur, if the hypothesis of
   Theorem \ref{realmaintheorem} is valid, we may follow its proof, but using Lemma \ref{Steinberg} for 
   $m=1$, to establish conjugacy of these complements in
   a larger group $G^\diamond$, associated to $Q\otimes Y$ for a suitable finite dimensional, rational $G/N$-module $Y$.  That is, given two rational $G$-module
   structures $Q'$ and $Q^{\prime\prime}$ with the same underlying $N$-module $Q$, there is, assuming the hypothesis of
   Theorem \ref{realmaintheorem}, an isomorphism $Q'\otimes Y\cong Q^{\prime\prime}\otimes Y$ for some $G/N$-module $Y$.
     (This argument has much in common with that  in Donkin
\cite{Donkin},
which proves uniqueness of PIM $G$-structures---when they exist---after tensoring with a single suitably large twisted Steinberg module. Indeed, the set-up in \cite {Donkin} has the advantage for uniqueness questions of dealing entirely with abelian 1-cohomology issues, as occur with $\Ext^1$-groups.)\end{rem}

\section{Final bits and pieces}

\subsection{The non-connected case} This paper has been written with a view toward applications to connected
groups, but the arguments apply without that assumption. For example, there is no need to require in Lemma \ref{specialcase}
that $G/N$ is connected, only that the connected component $(G/N)_o$ of the identity is a reductive group. Once a $Y$ has
been chosen for $(G/N)_o$, its induction to $G/N$ works as a $Y$ for $G/N$. Similarly, Theorem \ref{realmaintheorem} can
be formulated for non-connected groups $G$ requiring only that $(G/N)_o$ be reductive. With the modified Lemma \ref{specialcase},
the proof of theorem is essentially unchanged. Note that Lemma \ref{biglemma} has no connectedness requirement.

\subsection{Examples} In this subsection, we present several elementary examples.

\subsubsection{$G$-stability does not imply a $G$-module structure}Let $G=G'\times G^{\prime\prime}$, where $G'\cong G^{\prime\prime}\cong SL_2(k)$ and $k$ has characteristic $p=2$. Let $N=G_1=G'_1\times G_1^{\prime\prime}$, the first Frobenius kernel of $G$. The
category of rational $N$-modules identifies with the category of restricted modules for the Lie algebra ${\mathfrak g}'
\times{\mathfrak g}^{\prime\prime}$ of $G$. Let
$Q$ be the space of $2\times 2$ matrices of trace 0 over $k$. Both ${\mathfrak g}'$ and ${\mathfrak g}^{\prime\prime}$ act
on $Q$ via $v\mapsto [X,v]$, since $Q$ identifies naturally with ${\mathfrak g}'$ and with ${\mathfrak g}^{\prime\prime}$.
Furthermore, all these linear operators from ${\mathfrak g}'$ and ${\mathfrak g}^{\prime\prime}$ commute with each other. Hence, $Q$ is a restricted $\mathfrak g$-module, and hence a rational $N$-module. We check directly that $Q$ is $G$-stable, but it does not have the structure of a rational $G$-module compatible with its
$N$-module structure.\footnote{It is easy to see $Q$ is not even strongly $G$-stable. If it were,
   we could construct a group $G^\diamond$ as in section 2, for
   which there would be a group homomorphism into $\text{SL}_k(Q)$, even
   into a maximal parabolic subgroup $P$. The group $G^\diamond$ contains two subgroups $G^{\prime\diamond}$ and $G^{\prime\prime\diamond}$,
pull-backs of $G'$ and $G^{\prime\prime}$ from the natural quotient $G$ of $G^\diamond$. Clearly,
 one of $G^{\prime\diamond}$ or $G^{\prime\prime\diamond}$ maps into $R_u(P)$ in the supposed map of $G^\diamond\to P$. This map has a two dimensional
image on the Lie algebras of each of $G'$ and $G^{\prime\prime}$, each image nontrivial
under the induced adjoint action. But $R_u(P)$ is commutative.}

When $G$ is not reductive it is even easier to give such examples. For example, let $G$ be the group of upper unipotent
$3\times 3$-matrices. Let $N$ be its one-dimensional center. Let $Q$ be the 2-dimensional indecomposable for $N$ in which
$N$ acts as the upper unipotent $2\times 2$-matrices in SL$_2(k)$.  Then $Q$ is $G$-stable, but does not have a $G$-structure compatible with the action of $N$. Indeed, $Q$ is strongly $G$-stable, and the pair $(Q,V)$ is strongly $G$-stable, where $V=\soc^N Q$. However, the pair $(Q,V)$ is not numerically $G$-stable when $k$ has characteristic 0. Using Corollary \ref{cortoendofsec3},
 it comes down to killing a 2-cohomology class with coefficients in $J_V = k$ by embedding in a larger $J=J_Y$, where $Y$ is the fixed point module for $N$ in $M$. It easy to see $J$ has the form $\End_k(Y)$, as a $G/N$-module, where $k$ is embedded as scalar
 multiplications of $1_Y$. But such an embedding of $k$ in such a $J$ is split in characteristic 0, so cannot kill any
nonzero cohomology class.
This shows that Theorem \ref{realmaintheorem} can fail if its
hypothesis that $G/N$ be reductive is removed.\footnote{The example also shows Lemma \ref{specialcase} fails for unipotent groups
  in characteristic 0, since its conclusion implies numerical
  stability. The argument, however, can be viewed even more
  directly in this case, showing clearly why it is not possible
  to kill the underlying cohomological obstruction with a tensor product.}

\subsubsection{Not all indecomposable $N$-modules are $G$-stable} For example, let $G$ be a semisimple, simply connected algebraic
group.
Let $N=G_r$, for some $r\geq 1$. For an $r$-restricted dominant weight $\lambda$, let $Z_r(\lambda)=\text{coind}_{B_r^+}^G\lambda$
in the notation of \cite[II.3]{Jan}. By the discussion in \cite[II.11]{Jan}, $Z_r(\lambda)$ is $G$-stable if and only
if $\lambda=(p^r-1)\rho$, i.~e., $Z_r(\lambda)=\St_r$.

\subsection{Converse to Lemma \ref{biglemma}} We give a brief sketch. Let $G$ be an affine algebraic group (not necessarily connected), and let $Q$ be a finite dimensional rational module for a closed, normal $k$-subgroup scheme $N$ of $G$. Suppose
that $V$ is a $G$-submodule of $Q$ containing $\soc^NQ$. Now assume that the pair $(Q,V)$ is strongly $G$-stable, so there
is a morphism $\alpha:G\to GL_k(Q)$ such that (\ref{Spoint}) holds.    It follows that the left-hand version of (\ref{compatible}) holds,
namely,  $\alpha(ng)=\rho(n)\alpha(g)$,
 for all $g\in G, n\in N$. (This can, and should be, written in terms of $S$-points, but we take
that as understood in this context. We continue with this
  informal mode of exposition below.) Identifying $Q$ with affine $\ell$-space if $\ell=\dim Q$, let $\text{Morph}(G,Q)$ be the
vector space of morphisms $f:G\to Q$ (of $k$-schemes). For $f\in\text{Morph}(G,Q)$ and $n\in N$, define $f\cdot n\in \text{Morph}(G,Q)$
by putting $(f\cdot n)(g)=n^{-1}f(ng)$, for $g\in G$.  In this way, $\text{Morph}(G,Q)$ is a right $N$-module.
The space $\text{Morph}_N(G,Q)$ of $N$-invariant functions is a left $G$-module with respect to the action $(h\cdot f)(g):=
f(gh)$, for $f\in \text{Morph}_N(G,Q)$, $h,g\in G$. Then $\text{Morph}_N(G,Q)\cong\ind_N^GQ$, the rational $G$-module obtained by inducing $Q$ from $N$ to $G$; see \cite{CPS} or \cite[I.3.3(2)]{Jan}.
Further, given $q\in Q$, let $f_q\in \text{Morph}(G,Q)$ be defined by $f_q(g):=\alpha(g)q$, $g\in G$. For $n\in N$,
$(f_q\cdot n)(g)=n^{-1}f_q(ng)=\rho(n^{-1})\alpha(ng)q=f_q(g)$, verifying that $f_q\in \text{Morph}_N(G,Q)$.   (Here we use the left-hand version of (\ref{Spoint})(2).) Furthermore,
the map $Q\to\text{Morph}_N(G,Q)$, $q\mapsto f_q$, is similarly checked, using (\ref{compatible}), to be a morphism of left $N$-modules, which restricts
to a morphism of $G$-modules on the $G$-submodule $V$ of $Q$.
Next, if $\text{Ev}:\text{Morph}_N(G,Q)\to Q$, $f\mapsto f(1)$, is the evaluation map, then the composition $Q\to \text{Morph}_N(G,Q)\overset{\text{Ev}}\to Q$ is the identity map on $Q$. (One needs the fact that $\alpha(1)=1_Q$.) Since Ev is a morphism of $N$-modules, the map $Q\to \text{Morph}_N(G,Q)$ splits as a morphism of $N$-modules. Identify $Q$ with its image in
$\text{Morph}_N(G,Q)$. Let $M$ be the finite dimensional
$G$-submodule of ${\text{Morph}}_N(G,Q)$ generated by $Q$. Then $V\subseteq Q$ is a $G$-submodule of $M$, and we have shown
that $M|_N\cong Q\oplus R$ in the category of rational $N$-modules, i.~e., the converse of the lemma is proved. (Of course, when $G/N$ is
reductive, the converse also follows from Theorem \ref{realmaintheorem}, which establishes numerical
stability---a much
stronger result.)

\subsection{Observations on observability} Let $H$ be a closed subgroup (or, closed $k$-subgroup scheme) of an affine algebraic group $G$ (or,
more generally, a $k$-group scheme $G$). We do not assume that $H$ is normal. Recall that $H$ is observable provided that
every rational $H$-module $V$ is a submodule of a rational $G$-module. There is also a
quotient module condition, namely,  $H$ is observable if and only if, given
any rational $H$-module $Q$, the evaluation map $\text{Ev}:\ind_H^GQ\to Q$ is surjective (which holds
if and only if
$Q$ is a quotient of a rational $G$-module). See \cite{Grosshans} for
more discussion, further references, etc. Though the observable terminology has been used only for subgroups, it applies
in a similar way to rational $H$-modules. We will call a finite dimensional rational $H$-module $Q$ {\it split observable}
provided the evaluation map $\text{Ev}:\ind_H^GQ\to Q$ is surjective and splits as a map of $H$-modules.
(Equivalently, $Q$ is an $H$-direct summand of a rational $G$-module.) It is easy to check $Q$ is
split observable if and only if there exists a finite dimensional rational $G$-module $M$ such that $Q$ is a
direct summand of $M|_H$. 
The proofs of Lemma \ref{biglemma} and its converse above can be easily modified to show that $Q$ is split observable for $H$
if and only if there exists a morphism $\alpha:G\to \End_k(Q)$ of $k$-schemes satisfying $\alpha(1)=1_Q$, $\alpha(gh)=\alpha(g)\rho_H(h)$
and $\alpha(hg)=\rho_H(h)\alpha(g)$, for all $g\in G,h\in H$. (As usual, these equalities must be suitably interpreted for
$k$-group schemes.) In view of footnote 6, these conditions are equivalent to the strong $G$-stability
of $Q$ when $H$ is normal. Thus, split observability provides a generalization of the strong $G$-stability
property to modules  for subgroups $H$ which are not necessarily normal.

\subsection{Schreier systems} Finally, while it has not been our intention to write a treatise on the Schreier construction for general $k$-group schemes, we do note that most of the definitions and constructions of \S2.4 require only that $k$ be a commutative ring.

\end{document}